\numberwithin{equation}{section}
\newtheorem{main}{Theorem}
\newtheorem{thm}{Theorem}[section]
\newtheorem*{thm*}{Theorem}
\newtheorem{lem}[thm]{Lemma}
\newtheorem*{prob*}{Problem}
\newtheorem{prop}[thm]{Proposition}
\newtheorem*{prop*}{Proposition}
\newtheorem{cor}[thm]{Corollary}
\newtheorem*{cor*}{Corollary}
\theoremstyle{definition}
\newtheorem{defn}[thm]{Definition}
\newtheorem*{defn*}{Definition}
\newtheorem{remark}[thm]{Remark}
\newtheorem*{question*}{Question}
\newtheorem*{Pquestion*}{Popa's question}
\newtheorem*{conv*}{Convention}
\newcommand{\M}{\mathbb{M}}
\newcommand{\N}{\mathbb{N}}
\newcommand{\R}{\mathbb{R}}
\newcommand{\C}{\mathbb{C}}
\newcommand{\T}{\mathbb{T}}
\newcommand{\cG}{\mathcal{G}}
\newcommand{\cH}{\mathcal{H}}
\newcommand{\cM}{\mathcal{M}}
\newcommand{\cU}{\mathcal{U}}
\newcommand{\ee}{\varepsilon}
\newcommand{\Real}{\operatorname{Re}}
\newcommand{\twoone}{II$_1$ }
\begin{document}

\title[Internal sequential commutation and single generation]
{ Internal sequential commutation and single generation}

\author[David Gao]{David Gao}
\address{Department of Mathematical Sciences, UCSD, 9500 Gilman Dr, La Jolla, CA 92092, USA}
\email{weg002@ucsd.edu}
\urladdr{https://sites.google.com/ucsd.edu/david-gao/home}

\author[Srivatsav Kunnawalkam Elayavalli]{Srivatsav Kunnawalkam Elayavalli}
\address{Department of Mathematical Sciences, UCSD, 9500 Gilman Dr, La Jolla, CA 92092, USA}\email{srivatsav.kunnawalkam.elayavalli@vanderbilt.edu}
\urladdr{https://sites.google.com/view/srivatsavke/home}

\author[Gregory Patchell]{Gregory Patchell}
\address{Department of Mathematical Sciences, UCSD, 9500 Gilman Dr, La Jolla, CA 92092, USA}\email{gpatchel@ucsd.edu}
\urladdr{https://sites.google.com/view/gpatchel/home}

\author[Hui Tan]{Hui Tan}
\address{Department of Mathematical Sciences, UCSD, 9500 Gilman Dr, La Jolla, CA 92092, USA}
\email{hutan@ucsd.edu}
\urladdr{https://tanhui3.wordpress.com/}

\begin{abstract}
We extract a precise internal description  of the sequential commutation equivalence relation introduced in \cite{patchellelayavalli2023sequential} for tracial von Neumann algebras. As an application we prove that if a tracial von Neumann algebra $N$ is generated by unitaries $\{u_i\}_{i\in \mathbb{N}}$ such that $u_i\sim u_j$ (i.e, there exists a finite set of Haar unitaries $\{w_i\}_{i=1}^{n}$ in $N^\mathcal{U}$ such that $[u_i, w_1]= [w_k, w_{k+1}]=[w_n,u_j]=0$ for all $1\leq k< n$) then $N$ is singly generated. This generalizes and recovers several known single generation phenomena for \twoone factors in the literature with a unified proof.   
\end{abstract}
\maketitle


\section{Introduction}
Building on recent developments in the structure theory of ultrapowers of \twoone factors (\cite{exoticCIKE, houdayer2023asymptotic, patchellelayavalli2023sequential}), a new structural framework in \twoone factors involving the notion of sequential commutation was introduced in \cite{patchellelayavalli2023sequential}. In \cite{patchellelayavalli2023sequential}, several properties and features  of this equivalence relation were  studied on the space of Haar unitaries (unitaries with $\tau(u^n)=0$ for all $n\in \mathbb{N}$) in a \twoone factor, with main applications to elementary equivalence problems (see also \cite{BCI15, AGKE, goldbring2023uniformly}).   In this paper we continue the study of this framework and as an application we prove a general result concerning single generation, extending several prior works in this theme, in particular those considered in  \cite{popacartansingle, PopaGeThin, GeShenGen, shen2009type}.

Now we state and describe the main result of the paper. Recall if $u,v\in \mathcal{U}(M)$ for a II$_1$ factor $M$, we write $u\sim^\cU v$ if there exist $k>0$ and Haar unitaries $u=w_0,w_1,\ldots,w_k=v$ such that $w_i\in M^\cU$ for each $i$ and $[w_i,w_{i+1}]=0$ for $i=0,\ldots,k-1.$  
\begin{main}
        If $M$ is generated by a countable set of unitaries $\{u_i\}_{i\in \mathbb{N}}$ such that $u_i\sim^\cU u_j$ for all $i\neq j$, then $M$ is singly generated.

\end{main}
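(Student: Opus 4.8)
The plan is to reduce the statement to an encoding, or ``combination'', argument, and to derive the structure that makes the encoding possible from the internal description of $\sim^\cU$ supplied earlier in the paper. Since $W^*(x)=W^*(\Real\,x,\Img\,x)$ for any $x\in M$, it suffices to generate $M$ by two self-adjoint elements, equivalently by one element; splitting each $u_i$ into real and imaginary parts, the task becomes to combine the resulting countable family of self-adjoint contractions $\{a_n\}$ into a single generator. The new input I would use is the internal characterization of sequential commutation: for each pair $u_i,u_j$ and each $\eps>0$, the relation $u_i\sim^\cU u_j$ yields honest Haar unitaries $w_1,\dots,w_m\in M$ with $\|[u_i,w_1]\|_2,\ \|[w_k,w_{k+1}]\|_2,\ \|[w_m,u_j]\|_2<\eps$, that is, a genuine chain inside $M$ commuting up to arbitrarily small $2$-norm error. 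The first real task is to promote these pairwise, $\eps$-dependent chains into a single diffuse abelian subalgebra $A=W^*(w)\subseteq M$, with $w$ a Haar unitary, that approximately commutes with \emph{every} generator at once; I would obtain this by a diagonalization over the countable generating set, using the connectivity of the chains to transport an approximately-commuting Haar unitary from one generator to the next, and realizing the limit via a representing sequence for an element of $M^\cU$.

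Granting such a $w$, the heart of the argument is the standard spectral-separation encoding used in single-generation results (compare \cite{shen2009type, GeShenGen}). One fixes a self-adjoint generator $b$ of the diffuse abelian algebra $A=W^*(w)$, partitions its spectrum into intervals, and uses the associated pairwise orthogonal spectral projections as disjoint ``slots'', together with off-diagonal partial isometries, to record each $a_n$ in its own slot. Schematically the single generator has the shape
\[
x \;=\; b \;+\; \sum_{n} d_n\,y_n,
\]
where $y_n$ is the off-diagonal block encoding $a_n$ and the weights $d_n\to0$ are summable and chosen so that the blocks occupy disjoint portions of the $b$-grading. The decisive role of the commutation hypothesis is that the approximate commutation of $w$, hence of the spectral projections of $b$, with the $a_n$ keeps the slots stable under the generators: the diagonal part $b$ can be read off $x$ up to a controlled error, so $A$, and therefore $w$, is recovered, after which each $a_n$ is read off its own block. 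This gives $W^*(x)\supseteq W^*(w,\{a_n\})=M$.

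To make this precise I would not fix a single $w$, but work with a sequence $w^{(\ell)}$ whose commutators with the first $\ell$ generators are smaller than $2^{-\ell}$, form the corresponding elements $x^{(\ell)}$, and arrange the weights so that $\{x^{(\ell)}\}$ is Cauchy with limit $x$ still generating $M$; the approximate-commutation errors are summable and vanish in the limit. The classical single-generation phenomena recovered by the theorem appear as degenerate cases of this scheme: for property Gamma a single central Haar unitary in $M'\cap M^\cU$ already commutes with all generators, so the chains have length one, while for a Cartan subalgebra the normalizing unitaries provide chains of length two, and in both cases the encoding specializes to the known constructions.

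The step I expect to be the main obstacle is the first one: converting the \emph{exact} commutation of ultrapower Haar unitaries along chains into a single internal Haar unitary $w\in M$ that approximately commutes with all generators simultaneously while retaining enough spectral room to host the encoding. Exact simultaneous commutation is impossible in a factor, since a Haar unitary commuting with a generating set would be central, so the passage through $M^\cU$ is unavoidable and the difficulty is quantitative: the $2$-norm errors contributed by the intermediate links of a length-$m$ chain must be controlled uniformly as chains are concatenated across the whole countable generating set, and then transferred to a genuine element of $M$ without destroying either the diffuseness of $W^*(w)$ or the orthogonality of the encoding slots. This is exactly the point at which the internal description of $\sim^\cU$ developed in the first part of the paper is meant to do the heavy lifting, and where I expect the technical core of the proof to concentrate.
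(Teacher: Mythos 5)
There is a genuine gap, and it sits exactly where you predicted: your ``first real task'' of promoting the chains into a single Haar unitary $w\in M$ that approximately commutes with \emph{every} generator simultaneously is not achievable in general, and the rest of your encoding depends on it. Sequential commutation is strictly weaker than the existence of a common (approximately) commuting Haar unitary: if such a $w$ existed for a generating set, $M$ would admit a nontrivial approximately central sequence, i.e.\ have property Gamma. But the theorem is meant to cover, for instance, $L(PSL_n(\mathbb{Z}))$ for $n\geq 3$, which has property (T) and is full. Chains do not ``transport'' approximate commutation: from $[u,w_1]=[w_1,w_2]=[w_2,v]=0$ one cannot extract any single element approximately commuting with both $u$ and $v$; collapsing the chain length to one is precisely the strengthening that the sequential-commutation framework is designed to avoid. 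Consequently your reduction to a single diffuse abelian $A=W^*(w)$ with stable ``slots'' under all generators cannot be carried out, and the spectral-separation encoding built on top of it has nothing to stand on.

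You have also misread the internal characterization, in a way that matters. Proposition \ref{equivalence of internal and external} does not produce Haar unitaries in $M$ commuting up to small $2$-norm error; it produces \emph{finite-dimensional} unitaries $x_1,\dots,x_{k-1}$ in $M$ that commute \emph{exactly} with their neighbors (and with the endpoints), each close in operator norm to a Haar unitary, with all atoms of equal small trace. The exactness is the point: it yields the condition $u_{i+1}^*u_iu_{i+1}\in W^*(u_1,\dots,u_i)$ needed to run the Ge--Shen style matrix-unit construction (Lemma \ref{generator-seq-commuting}), which compresses an \emph{entire chain} --- not a single commutant element --- into one self-adjoint $T_k$ supported under a projection $p_k\in R$ of trace $<2^{-k-1}$. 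The paper then never collapses the chains at all: it keeps one chain per generator, makes the supports $p_k$ pairwise orthogonal with $\sum_k\tau(p_k)<1$ inside a fixed irreducible hyperfinite subfactor $R$, separates the spectra of the $T_k$, and recovers everything from a single shift unitary $v\in\M_N(\C)\subset R$ and $T=\sum_k T_k$ by functional calculus. If you want to salvage your outline, replace the search for a global approximately commuting $w$ by an encoding of each finite exactly-commuting chain into its own orthogonally supported self-adjoint; that is the technical core you correctly anticipated, but it is resolved by exact finite-dimensional commutation plus small supports, not by quantitative control of $2$-norm commutators.
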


Before we describe the proof technique, we document some examples of II$_1$ factors which admit a generating set consisting of sequentially commuting unitaries (we call this class $\mathcal{S}\mathcal{C})$: 

\begin{enumerate}
    \item II$_1$ factors with property Gamma. 
    \item Non prime II$_1$ factors. 
    \item II$_1$ factors with Cartan subalgebras. 
    \item $L(PSL_n(\mathbb{Z}))$, for $n\geq 3$. 
    \item II$_1$ factors with unique sequential commutation orbit $\mathfrak{O}(N)=1$ see  \cite{patchellelayavalli2023sequential} for examples. 
    \item II$_1$ factors with property C', see \cite{PopaPropC} for definition. 
\end{enumerate}

The single generation of the above families is not new in the settings of items (1) to (4): (1), (2) is due to \cite{PopaGeThin}, (3) is due to \cite{popacartansingle}, (4) is due to \cite{GeShenGen}. On the other hand the results in items (5) and (6) are new. Note also item (6) above recovers all items (1) through (4), in the sense that all the examples in (1) through (4) satisfy property C' (see \cite{GalatanPopa}). Additionally following results of \cite{shen2009type}, one can prove natural stability properties for single generation arising from factors from class $\mathcal{S}\mathcal{C}$, such as taking joins over diffuse intersections (see Remark \ref{stability under amalgams}). Interestingly all the examples above satisfy $h(N)\leq 0$, in the sense of 1-bounded entropy \cite{Hayes2018}. 

We now describe a crucial technical tool that we use in the proof of the main theorem. Developing certain discretization and lifting arguments, we extract a purely internal formulation of the sequential commutation equivalence relation which is equivalent to the original definition considered in \cite{patchellelayavalli2023sequential}.  Write $u\sim_k^+v$ if there exists $k$ so that for all $\ee>0$ there exist Haar unitaries $u=w_0,w_1,\ldots, w_k=v$ and unitaries $x_1,\ldots,x_{k-1}$, all in $M,$ such that, for each $i$, $\|x_i-w_i\|<\ee$, $x_i$ is finite-dimensional, $[u,x_1]=[x_{k-1},v]=0,$ and $[x_i,x_{i+1}] = 0.$ The following is shown in Proposition \ref{equivalence of internal and external}. 

\begin{prop*}
 Let $M$ be a \twoone factor and let $u,v\in M$ be Haar unitaries. Then $u\sim^\cU v$ if and only if there exists $k\in \mathbb{N}$ and a sequence $(v_n)_n$ of Haar unitaries in $M$ converging in the SOT to $v$ such that $u\sim^+_k v_n$ for all $n.$ \end{prop*}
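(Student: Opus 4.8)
The plan is to prove the two implications separately, with the backward direction being a routine ultralimit argument and the forward direction carrying the real content.

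For the backward implication, I would suppose $v_n\to v$ in SOT with $u\sim^+_k v_n$ for all $n$. Since the $v_n$ are unitaries, SOT convergence gives $\|v_n-v\|_2\to 0$, so $(v_n)_n=v$ in $M^\cU$. Fixing $n$ and applying the definition of $\sim^+_k$ with $\ee=1/m$ produces, for each $m$, finite-dimensional unitaries $x_1^{(n,m)},\dots,x_{k-1}^{(n,m)}$ and Haar unitaries $w_1^{(n,m)},\dots,w_{k-1}^{(n,m)}$ in $M$ with $\|x_i^{(n,m)}-w_i^{(n,m)}\|_2<1/m$ and exact commutations $[u,x_1^{(n,m)}]=0$, $[x_i^{(n,m)},x_{i+1}^{(n,m)}]=0$, $[x_{k-1}^{(n,m)},v_n]=0$. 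Choosing a diagonal $m=m(n)\to\infty$ and setting $W_i:=(w_i^{(n,m(n))})_n\in M^\cU$, each $W_i$ is a Haar unitary (an ultralimit of Haar unitaries), and because $\|w_i^{(n,m(n))}-x_i^{(n,m(n))}\|_2\to 0$ while the $x_i$ commute exactly, all consecutive commutators $[u,W_1]$, $[W_i,W_{i+1}]$, $[W_{k-1},(v_n)_n]$ vanish in $M^\cU$. Using $(v_n)_n=v$ I obtain a commuting chain of Haar unitaries $u=W_0,W_1,\dots,W_{k-1},W_k=v$ in $M^\cU$, i.e. $u\sim^\cU v$.

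For the forward implication, I would start from a chain $u=W_0,W_1,\dots,W_k=v$ of Haar unitaries in $M^\cU$ with $[W_i,W_{i+1}]=0$, and fix $\ee>0$. The first step is to discretize inside the ultrapower while keeping commutation exact. For each interior index $i$, the algebra $\{W_i\}''\cong L^\infty[0,1]$ is diffuse abelian, so I can choose a finite-dimensional unitary $X_i\in\{W_i\}''$ with $\|X_i-W_i\|_2<\ee$ whose eigenvalues are nearly equidistributed on $\mathbb{T}$ with comparable spectral weights; this makes $X_i$ lie within $\ee$ of a Haar unitary and gives it a definite minimal spectral gap $\rho$. Because $X_i\in\{W_i\}''$ and $W_{i-1},W_{i+1}$ (and $u,v$ at the ends) commute with $W_i$, they commute with $X_i$; moreover $X_i$ and $X_{i+1}$ both lie in the abelian algebra $\{W_i,W_{i+1}\}''$, so $[X_i,X_{i+1}]=0$. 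Thus $u=X_0,X_1,\dots,X_{k-1},X_k=v$ is an exactly commuting chain in $M^\cU$ whose interior vertices are finite-dimensional and $\ee$-close to Haar unitaries. I then transfer this to $M$ with a moving endpoint: writing $X_i=(x_i^{(n)})_n$ with each $x_i^{(n)}\in M$ finite-dimensional of the same spectrum (lifting the spectral partition of unity termwise), the exact relations in $M^\cU$ become $\|[u,x_1^{(n)}]\|_2,\ \|[x_i^{(n)},x_{i+1}^{(n)}]\|_2,\ \|[x_{k-1}^{(n)},v]\|_2\to 0$ along $\cU$, with each $x_i^{(n)}$ within $2\ee$ of a Haar unitary for $\cU$-almost every $n$. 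I would upgrade approximate to exact commutation by sweeping along the chain: at an interior link the partner $a$ is finite-dimensional with gap $\ge\rho$, so replacing $b$ by the unitary part of $E_{\{a\}'}(b)$ moves it by at most $\|[a,b]\|_2/\rho$ while forcing $[a,b]=0$. The discrepancy accumulated along the sweep is absorbed at the end by replacing the fixed endpoint $v$ with a Haar unitary $v_n$ commuting exactly with the corrected $\tilde x_{k-1}$ and satisfying $\|v_n-v\|_2\to 0$. Letting $\ee=\ee_m\downarrow 0$ and selecting indices along $\cU$ for which every error is below $1/m$, a diagonalization then produces the required SOT-convergent sequence with $u\sim^+_k v_n$.

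I expect the main obstacle to be the very first link of the sweep, namely forcing exact commutation with the fixed endpoint $u$. Unlike the interior links, $u$ is a Haar unitary with no spectral gap, and approximate commutation with $u$ does not in general place a unitary near the relative commutant $\{u\}'\cap M$; this is exactly the failure of $\{u\}'\cap M^\cU=(\{u\}'\cap M)^\cU$, as already occurs for a singular generator MASA. The resolution must use the hypothesis $u\sim^\cU v$ itself: the spectral projections of $X_1$ commute with $u$ in $M^\cU$ because they lie in the abelian algebra $\{u,W_1\}''$, and the delicate point is to lift precisely these projections to $u$-commuting projections in $M$, so that $\tilde x_1\in\{u\}'\cap M$ can be taken $\ee$-close to a Haar unitary while still bridging toward $v$. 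I expect this fixed-endpoint lifting, rather than any of the gap-based interior corrections, to be the technical heart of the argument.
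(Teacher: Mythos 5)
Your backward implication is correct and is essentially the paper's argument. The forward implication, however, contains a genuine gap that you yourself flag: you never carry out the step of making the first finite-dimensional unitary commute exactly with the fixed endpoint $u$, and you correctly observe that your conditional-expectation sweep cannot start there, since $u$ is Haar with no spectral gap and approximate commutation with $u$ does not place an element near $\{u\}'\cap M$. Deferring ``the technical heart of the argument'' to a step you only assert ``must'' exist is not a proof. There is also a secondary problem with the interior sweep: once you replace $x_{i+1}$ by the unitary part of $E_{\{\tilde x_i\}'}(x_{i+1})$, the result is in general no longer finite-dimensional, no longer has equal-mass atoms (as Definition \ref{seq-comm-def}(c) requires), and no longer has a spectral gap, so the next link of the sweep has nothing to expect onto; you would need to re-discretize and re-estimate at every stage, and none of that is done.

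The paper dissolves rather than solves the endpoint problem, and avoids conditional expectations entirely, by reversing which side of each link is made exact ``for free.'' The first finite-dimensional unitary is taken to be $f(u)\in W^*(u)$ for an $N$-uniform discretization $f$ of the circle: it is an honest element of $M$, commutes with $u$ tautologically, has equal atoms of mass $1/N$ with eigenvalue separation $2\pi/N$, and is within $\pi/N$ of the Haar unitary $u$. The burden then shifts to producing a Haar lift of the next vertex $w_1$ that commutes \emph{exactly} with $f(u)$, and this is Lemma \ref{lift-to-commuting-almost-haars}: since $f(u)$ and $w_1$ generate a diffuse abelian subalgebra of $M^\cU$, that subalgebra is $W^*(h)$ for a Haar unitary $h$, which lifts to Haar unitaries $h_n\in M$ (Lemma \ref{Haar lifting}); applying the relevant Borel functions to $h_n$ (Lemma \ref{borel-map-of-lift}) yields an exactly commuting pair in $M$ lifting $(f(u),w_1)$ whose first component has the same atomic spectral measure as $f(u)$; and Lemma \ref{conj-finite-unitaries} conjugates that component onto $f(u)$ by a unitary close to $1$ in $\|\cdot\|_2$, carrying the second component to a Haar lift of $w_1$ commuting exactly with $f(u)$. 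Iterating along the chain --- discretizing each newly chosen lift $w_{i,n}$ to obtain the next finite-dimensional unitary, which then automatically commutes with everything already built --- produces the exact chain in $M$ with moving endpoint $v_n\to v$. This joint-lifting-plus-small-conjugation device is precisely the missing ingredient in your sketch; the spectral projections of $X_1$ that you propose to lift are exactly what it handles.
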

    
The above characterization of sequential commutation is \emph{not} to be confused with the following naive internal formulation:  say $u\sim v$ if there exists $k>0$ and Haar unitaries $u=w_0,w_1,\ldots,w_k=v$ such that $w_i\in M$ for each $i$ and $[w_i,w_{i+1}]=0$ for $i=0,\ldots,k-1.$ Indeed, we show in Proposition \ref{thm-continuum-orbits} that in all separable II$_1$ factors, $\sim$ has continuum many orbits, while of course, $\sim^{\cU}$ has a unique orbit in several examples separable factors including those with property Gamma and the exotic factors from \cite{exoticCIKE}.

Now we describe some aspects of the proof of the main theorem. We are heavily inspired by the techniques of  \cite{GeShenGen}, \cite{shen2009type}. First, we move completely into the II$_1$ factor $N$ via our internal characterization of sequential commutation, but in addition, we can choose the commuting unitaries to be finite dimensional of large dimension, and small measure on all of their atoms. We then prove a refinement of an argument by
\cite{shen2009type} to properly leverage the sequential commutation of just a generating set (as opposed to all pairs of unitaries) while passing to the entire von Neumann algebra; and then essentially refine the strategy of \cite{GeShenGen} replacing the requirement of diffuseness of the sequential chain with a small atoms condition that we can afford; and finally conclude using patching and spectral calculus arguments. 



\subsection*{Acknowledgements:} The second author thanks David Sherman for a motivating conversation in Charlottesville in March 2024. We thank Adrian Ioana for helpful comments on the first draft. We thank Ben Hayes, Adrian Ioana and David Jekel for helpful conversations and encouragement.   

\section{Preliminaries}

Throughout, $(M,\tau)$ and $(M_n,\tau_n)$ will denote tracial von Neumann algebras and $\cU$ will denote a countably incomplete, or free, ultrafilter on $\N.$ $\prod_\cU M_n$ will denote the tracial ultraproduct over $\cU$ of the $(M_n,\tau_n).$ If a sequence $(x_n)_n$ of elements in $M_n$ represents an element $x\in \prod_\cU M_n$, we say that $(x_n)_n$ is a \emph{lift} of $x.$ We call a unitary $u\in M$ \emph{Haar} if $\tau(u^n) = 0$ for all non-zero integers $n.$ We write $\cH(M)$ to denote the set of Haar unitaries in $M.$ If $W^*(u)$, the algebra generated by $u$ in $M,$ is finite-dimensional we say $u$ is finite-dimensional; equivalently, $u$ has finite spectrum.

We first state without proof a general lifting result for normal elements, expanding upon Lemma 2.2 of \cite{patchellelayavalli2023sequential}.

\begin{lem}[Lemma 2.2 of \cite{patchellelayavalli2023sequential}]\label{Haar lifting}
    If $u\in \prod_\cU (M_n,\tau_n)$ is a Haar unitary and each $M_n$ is diffuse, then there are Haar unitaries $u_n\in M_n$ such that $u=(u_n)_n$.
\end{lem}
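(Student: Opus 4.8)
\noindent\emph{Sketch of a proof.} The plan is to reduce the statement to a problem about transporting spectral distributions and then to solve that problem measure-theoretically inside a diffuse abelian subalgebra of each $M_n$. First I would fix a lift of $u$ by genuine unitaries: writing $u=e^{ih}$ with $h=h^*$ of spectrum in $(-\pi,\pi]$ (so $\|h\|\le\pi$), lift $h$ to a self-adjoint $h_n\in M_n$ with $\|h_n\|\le\pi$ (take any lift, symmetrize it, and truncate its spectrum to $[-\pi,\pi]$), and set $u_n=e^{ih_n}$; since $t\mapsto e^{it}$ is $\|\cdot\|_2$-continuous on the self-adjoint ball, $(u_n)_n$ represents $u$ and each $u_n$ is a unitary. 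Let $\mu_n$ denote the spectral distribution of $u_n$, a Borel probability measure on the circle $\T$. The ultraproduct trace computes its Fourier coefficients, $\int_\T z^k\,d\mu_n(z)=\tau_n(u_n^k)$, and the Haar hypothesis on $u$ gives $\lim_{n\to\cU}\tau_n(u_n^k)=\tau(u^k)=0$ for every $k\ne0$. Thus every Fourier coefficient of $\mu_n$ converges along $\cU$ to that of normalized Haar measure $\lambda$ on $\T$, and since trigonometric polynomials are dense in $C(\T)$ this means $\mu_n\to\lambda$ weak-$*$ along $\cU$.

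It then suffices to perturb each $u_n$ to a Haar unitary $\tilde u_n\in M_n$ with $\lim_{n\to\cU}\|u_n-\tilde u_n\|_2=0$, for then $(\tilde u_n)_n$ also represents $u$ and consists of Haar unitaries. To construct $\tilde u_n$, I would choose a maximal abelian subalgebra (MASA) $B_n\subseteq M_n$ containing $u_n$ (extend $W^*(u_n)$ by Zorn's lemma). Because $M_n$ is diffuse, $B_n$ is diffuse: an atom of $B_n$ would be a minimal projection of $M_n$. Hence $B_n\cong L^\infty(X_n,\nu_n)$ for a standard \emph{nonatomic} probability space, and $u_n$ corresponds to a measurable map $X_n\to\T$ of law $\mu_n$. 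The task becomes purely measure-theoretic: produce a measurable map $\tilde u_n\colon X_n\to\T$ of law \emph{exactly} $\lambda$ (equivalently, a Haar unitary in $B_n$) that is $L^2(\nu_n)$-close to $u_n$. Cutting $\T$ at the point $1$ and pushing forward the monotone (quantile) coupling of $\mu_n$ and $\lambda$, one obtains such a $\tilde u_n$ with $\|u_n-\tilde u_n\|_2\le c_n$, where $c_n:=\|F_{\mu_n}^{-1}-F_\lambda^{-1}\|_{L^2[0,1]}$ is the one-dimensional quadratic transport cost between the angle-distributions.

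Finally I would show $\lim_{n\to\cU}c_n=0$. Since $\lambda$ is nonatomic, the map $\rho\mapsto\|F_\rho^{-1}-F_\lambda^{-1}\|_{L^2}$ is weak-$*$ continuous at $\lambda$ on the compact metrizable space of probability measures on $\T$; so for each $\delta>0$ there is a basic weak-$*$ neighborhood $\{\rho:|\int f_i\,d\rho-\int f_i\,d\lambda|<\eps,\ i=1,\dots,m\}$ of $\lambda$ on which $c<\delta$. If $c_n\ge\delta$ held on a set in $\cU$, then $\mu_n$ would lie outside this neighborhood on a set in $\cU$, so one of the finitely many conditions $|\int f_i\,d\mu_n-\int f_i\,d\lambda|\ge\eps$ would hold on a set in $\cU$, contradicting $\mu_n\to\lambda$ weak-$*$ along $\cU$. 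This gives $\lim_{n\to\cU}\|u_n-\tilde u_n\|_2=0$ and completes the argument.

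The hard part is the single place where diffuseness does genuine work: realizing the quantile coupling as an \emph{honest function} $\tilde u_n$ on $X_n$ when $\mu_n$ has atoms. The quantile map recovers the coupling directly only when $\mu_n$ is nonatomic; over an atom $\mu_n(\{z\})>0$ one must instead spread the fiber $u_n^{-1}(\{z\})$ — a set of positive measure — so as to fill out the appropriate piece of $\lambda$, and this is possible precisely because $(X_n,\nu_n)$, hence that fiber, is nonatomic. (Concretely one can avoid any optimal-transport black box by discretizing $u_n$ into $N$ equal arcs, adjusting the arc masses to $1/N$ inside $B_n$, and then spreading each arc's mass uniformly over it; this realizes the same bound at the cost of a standard diagonal choice $N=N(n)\to\infty$ along $\cU$.) The identical argument, with $\lambda$ replaced by an arbitrary prescribed weak-$*$ limit law, yields the general lifting statement for normal elements.
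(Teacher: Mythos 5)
The paper does not actually prove this lemma --- it is imported verbatim from \cite{patchellelayavalli2023sequential} and stated ``without proof'' --- so there is no in-paper argument to compare against; I can only assess your proposal on its own terms. Your argument is correct and is the standard proof one would expect: lift $u$ to honest unitaries via the exponential of a truncated self-adjoint lift (note $\|e^{ia}-e^{ib}\|_2\le\|a-b\|_2$ for self-adjoints, so no norm bound is even needed there), deduce from moment convergence that the spectral measures $\mu_n$ converge weak-$*$ along $\cU$ to Haar measure, and then repair each $u_n$ inside a diffuse abelian subalgebra by transporting its law to Haar measure, with diffuseness doing exactly the work you identify (spreading the fibers over atoms of $\mu_n$); the ultrafilter bookkeeping at the end is also correct. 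This is consonant with how the present paper handles the analogous issues in its Lemmas 2.2--2.4 (moment-preserving lifts, $N$-uniform discretization). The one point I would repair is your appeal to a MASA: the lemma is stated for arbitrary diffuse $(M_n,\tau_n)$, and a MASA of a non-separable algebra need not be $L^\infty$ of a \emph{standard} probability space. Your diffuseness argument for the MASA is fine (a minimal projection of a MASA is minimal in $M_n$, since $Ap$ is maximal abelian in $pM_np$), but it is cleaner to work in a separably-acting diffuse abelian subalgebra containing $u_n$: adjoin to $W^*(u_n)$ a diffuse abelian subalgebra of $pM_np$ for each spectral atom $p$ of $u_n$ (corners of diffuse algebras are diffuse, and these pieces commute with $u_n$ and with one another). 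With that substitution your quantile/discretization step goes through verbatim.
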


Let $x\in M$ be a normal element. By the spectral theorem, there is a projection-valued measure $\pi^x$ on the spectrum $\sigma(x)$ of $x$ such that $f(x) = \int_{\sigma(x)}f(\lambda) d\pi^x(\lambda)$ for all bounded Borel functions $f$ on $\sigma(x)$. We can define a measure $\mu^x = \mu^x_{\hat{1}}$ by $\mu^x(E) = \langle\pi^x(E)\hat{1},\hat{1}\rangle$ so that $\tau(f(x)) = \int_{\sigma(x)}f(\lambda)d\mu^x(\lambda)$ for all bounded Borel functions $f$ on $\sigma(x).$ 

We note that $\mu^x$ is uniquely determined by the moments $\tau(x^m (x^*)^n)$ of $x.$ This is because knowing these moments determines the values of $\tau(p(x))$ for all *-polynomials $p,$ which by the Stone-Weierstrass theorem allows us to determine $\tau(f(x))$ for all continuous functions $f$ on $\sigma(x)$, since $\sigma(x)$ is compact and $\tau$ is continuous. By the Riesz-Markov-Kakutani representation theorem, the function $\psi:C(\sigma(x)) \to \C$ given by $\psi(f) = \tau(f(x))$ is given by $\tau(f(x)) = \int_{\sigma(x)} f(\lambda)d\mu(\lambda)$ for a unique Borel probability measure $\mu$ on $\sigma(x)$. It is clear that $\mu^x$ is equal to this unique $\mu.$ We call $\mu^x$ the \emph{spectral measure} of $x$. The following is related to Lemma 4.4 in \cite{BenPT}, we include a proof of reader's convenience. 

\begin{lem}\label{borel-map-of-lift}
    Let $x\in \prod_\cU M_n$ be a normal element. Suppose $x = (x_n)_n$ is a lift of $x$ such that each $x_n$ is normal and has the same moments (and thus the same spectral measure) as $x.$ Then for any bounded Borel function on $\sigma(x)$, $(f(x_n))_n$ is a lift of $f(x).$
\end{lem}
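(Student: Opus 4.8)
The plan is to prove Lemma \ref{borel-map-of-lift} by a standard approximation argument, leveraging the hypothesis that $x_n$ and $x$ share the same spectral measure. Write $\mu = \mu^x = \mu^{x_n}$ for the common spectral measure. Since $(x_n)_n$ lifts $x$ and the ultraproduct trace is defined by $\tau((y_n)_n) = \lim_{\cU}\tau_n(y_n)$, I first observe that to show $(f(x_n))_n$ lifts $f(x)$ it suffices to produce, for each $f$, an element of $\prod_\cU M_n$ that both equals $f(x)$ and is represented by $(f(x_n))_n$; concretely, I must check that $\|f(x) - (f(x_n))_n\|_2 = 0$, where the $2$-norm of an element of the ultraproduct with representative $(y_n)_n$ is $\lim_\cU \|y_n\|_2$.

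The key step is to pass from polynomials to general bounded Borel functions. First I would treat $f$ a $*$-polynomial: here $f(x_n)$ is literally the same polynomial expression in $x_n, x_n^*$, and since multiplication and $*$ are continuous coordinatewise operations on the ultraproduct, $(f(x_n))_n$ represents $f((x_n)_n) = f(x)$ automatically. Next I would upgrade to continuous $f$ on $\sigma(x)$: by Stone--Weierstrass choose $*$-polynomials $p_k \to f$ uniformly on a compact set containing all the spectra, so that $\|f(x_n) - p_k(x_n)\|_\infty \to 0$ uniformly in $n$; combined with the polynomial case and an $\eps/3$ argument in $\|\cdot\|_2 \le \|\cdot\|_\infty$, this gives $(f(x_n))_n = f(x)$ for continuous $f$.

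The main obstacle, and the step deserving the most care, is the jump from continuous to bounded Borel $f$, since a Borel function need not be a uniform limit of continuous ones. Here is where the shared spectral measure hypothesis is essential. By Lusin's theorem (or by $L^2(\mu)$-density of $C(\sigma(x))$ in the bounded Borel functions), given $\eps > 0$ I would choose a continuous $g$ with $\|f - g\|_{L^2(\mu)} < \eps$ and $\|g\|_\infty \le \|f\|_\infty$. The crucial point is that $\|f(x) - g(x)\|_2^2 = \int |f-g|^2 \, d\mu^x$ and $\|f(x_n) - g(x_n)\|_2^2 = \int |f - g|^2 \, d\mu^{x_n}$, and these two integrals are \emph{equal} because $\mu^x = \mu^{x_n} = \mu$; thus both are smaller than $\eps^2$, uniformly in $n$. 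I then estimate
\[
\|f(x) - (f(x_n))_n\|_2 \le \|f(x) - g(x)\|_2 + \|g(x) - (g(x_n))_n\|_2 + \|(g(x_n) - f(x_n))_n\|_2,
\]
where the first term is below $\eps$ by the spectral-measure computation, the middle term vanishes by the continuous case already established, and the third term equals $\lim_\cU \|g(x_n) - f(x_n)\|_2 \le \eps$ again by the shared spectral measure. Letting $\eps \to 0$ forces $\|f(x) - (f(x_n))_n\|_2 = 0$, which is exactly the claim that $(f(x_n))_n$ is a lift of $f(x)$. The whole argument hinges on the fact that $\|h(y)\|_2$ depends only on $\mu^y$, so the equality of spectral measures transfers $L^2$-approximations from $x$ to every $x_n$ simultaneously.
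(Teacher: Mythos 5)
Your proposal is correct and follows essentially the same argument as the paper: both reduce to the polynomial case and then use the equality of spectral measures to transfer an $L^2(\mu)$-approximation uniformly to all $x_n$ simultaneously, closing with a triangle inequality over the ultrafilter. The only cosmetic difference is that the paper approximates $f$ directly by $*$-polynomials in $L^2(\mu)$ (skipping your intermediate continuous-function step, since polynomials are already $L^2$-dense), which shortens the argument without changing its substance.
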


\begin{proof}
    The result is immediate if $f$ is a *-polynomial by the ultrapower construction. Otherwise, take a sequence of *-polynomials $p_m$ such that $\|p_m-f\|_2 < 1/m$, where $\|g\|_2^2 = \int_{\sigma(x)} |g|^2 d\mu^x = \int_{\sigma(x_n)}|g|^2 d\mu^{x_n}$ (the 2-norm with respect to $\mu^x = \mu^{x_n}$).

    Let $(y_n)_n$ be a lift of $f(x)$. We note that for all $m,$ $p_m(x_n)$ is a lift of $p_m(x).$ Then $\lim_{n\to\cU} \|y_n - p_m(x_n)\|_2 = \|f(x) - p_m(x)\|_2 = \|f-p_m\|_2 <1/m.$ But we also have that $\|p_m(x_n) - f(x_n)\|_2 = \|p_m-f\|_2 < 1/m$ so that $\lim_{n\to\cU} \|y_n - f(x_n)\|_2 < 2/m.$ Since we can take $m$ arbitrarily large, we have that $(f(x_n))_n$ is a lift of $f(x).$
\end{proof}

As a corollary of the previous two lemmas, we have:

\begin{lem}\label{lem-normal-lifting}
    Let $(M_n)_n$ be a sequence of diffuse tracial von Neumann algebras and let $x\in \prod_\cU M_n$ be normal. Then $x$ lifts to a sequence $x_n\in M_n$ such that $\tau(x^r(x^*)^s) = \tau(x_n^r(x_n^*)^s)$ for all integers $r,s,n.$
\end{lem}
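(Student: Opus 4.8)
The plan is to reduce the statement to the two preceding lemmas by exhibiting $x$ as a bounded Borel function of a single Haar unitary of the ultraproduct. Concretely, I will first produce a Haar unitary $u\in\prod_\cU M_n$ together with a bounded Borel function $f$ on $\T$ such that $x=f(u)$. Granting this, Lemma~\ref{Haar lifting} lifts $u$ to a sequence of Haar unitaries $u_n\in M_n$, and since every Haar unitary has spectral measure equal to the normalized Haar measure on $\T$, each $u_n$ has the same spectral measure as $u$. Lemma~\ref{borel-map-of-lift} then shows that $(f(u_n))_n$ is a lift of $f(u)=x$, and I set $x_n:=f(u_n)$.

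To produce $u$, I will enlarge $W^*(x)$ to a separable diffuse abelian subalgebra $B$ of $\prod_\cU M_n$. Since $W^*(x)$ is separable abelian, write its unit as $p_0+\sum_{i\ge 1}p_i$, where $p_0$ is the support of the diffuse part of $W^*(x)$ and the $p_i$ ($i\ge 1$) are the (at most countably many) atoms. The diffuse corner $p_0 W^*(x)p_0$ needs no modification. For each atom $p_i$, the corner $p_i(\prod_\cU M_n)p_i$ is again diffuse—any nonzero minimal projection of this corner would be a minimal projection of $\prod_\cU M_n$, contradicting diffuseness of the ultraproduct (which itself follows from diffuseness of the $M_n$)—so it contains a Haar unitary $u_i$ of the corner, and I let $B_i:=W^*(u_i)$, a diffuse separable abelian algebra with unit $p_i$. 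Setting $B:=p_0W^*(x)p_0\oplus\bigoplus_{i\ge1}B_i$ yields a separable diffuse abelian subalgebra with $W^*(x)\subseteq B$, since the summands live in orthogonal corners and $\C p_i\subseteq B_i$. As any separable diffuse abelian von Neumann algebra is isomorphic to $L^\infty[0,1]$, $B$ is singly generated by a Haar unitary $u$; in particular $x\in W^*(x)\subseteq B=W^*(u)$, so $x=f(u)$ for some bounded Borel $f$ on $\T$ (one may modify $f$ on a Haar-null set to make it genuinely bounded, which does not change $f(u)$).

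It then remains to verify that the $x_n$ have the desired properties. Each $x_n=f(u_n)$ is a Borel function of the normal element $u_n$, hence normal, and for all $r,s\ge0$,
\[
\tau\big(x_n^{\,r}(x_n^*)^s\big)=\int_{\T}f(\zeta)^r\overline{f(\zeta)}^{\,s}\,d\mu^{u_n}(\zeta)=\int_{\T}f(\zeta)^r\overline{f(\zeta)}^{\,s}\,d\mu^{u}(\zeta)=\tau\big(x^r(x^*)^s\big),
\]
using that $\mu^{u_n}=\mu^{u}$ is Haar measure. I expect the construction of $u$—that is, the realization of $x$ as a Borel function of a single Haar unitary in the ultraproduct—to be the only genuine point; the subtlety is that $W^*(x)$ may have atoms and so must be fattened into a diffuse algebra, which is exactly where diffuseness of every corner of the ultraproduct (inherited from the $M_n$) is used. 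Everything after that is a direct invocation of Lemmas~\ref{Haar lifting} and~\ref{borel-map-of-lift}.
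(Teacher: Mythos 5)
Your proof is correct and follows essentially the same route as the paper's: realize $x$ as a bounded Borel function $f$ of a Haar unitary $u$ in the ultraproduct, lift $u$ via Lemma~\ref{Haar lifting}, and push the lift through $f$ via Lemma~\ref{borel-map-of-lift}. The only difference is that the paper simply asserts from diffuseness of $\prod_\cU M_n$ that such a $u$ with $x\in W^*(u)$ exists, whereas you carry out the (correct) fattening of the atomic part of $W^*(x)$ explicitly.
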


\begin{proof}
    $\cM = \prod_\cU M_n$ is diffuse, so there is a Haar unitary $u\in \cM$ so that $x\in W^*(u).$ Furthermore, by the Borel functional calculus there is a bounded Borel function $f$ such that $f(u) = x.$ Use Lemma \ref{Haar lifting} to lift $u$ to a sequence $(u_n)_n$ of Haars in $M_n.$ By Lemma \ref{borel-map-of-lift} $(f(u_n))_n$ is a lift of $x$ with all the same moments. 
\end{proof}

\section{Proofs of main results}

\subsection{Internal sequential commutation}

In a \twoone factor, every pair of finite dimensional subalgebras are unitarily conjugate. Therefore every pair of unitaries with the same finite-supported spectral measures are conjugate. If the pair of unitaries are moreover close in 2-norm, the conjugating unitary can be chosen close to identity. We first recall a technical lemma due to Connes \cite{connes1975outer} (see also Lemma 1.4 of \cite{Connes} or Lemma 2.2 of \cite{christensen1979subalgebrasoffinite}). 

\begin{lem}[Lemma 1.1.4 of \cite{connes1975outer}]\label{connes-estimate}
    Let $N$ be a countably decomposeable von Neumann algebra on a Hilbert space $\cH$ and let $\xi\in \cH$. If $p,q \in N$ are equivalent projections then there is a partial isometry $v\in N$ such that $vv^* = p,$ $v^*v = q,$ and $\|(v-p)\xi\| \leq 6\|(p-q)\xi\|$. If $N$ is a \twoone factor and $p,q\in N$ are projections such that $\tau(p)=\tau(q)$ then we can pick $v\in N$ a partial isometry such that $vv^*=p,$ $v^*v=q,$ and $\|v-p\|_2 \leq 3\|p-q\|_2$.
\end{lem}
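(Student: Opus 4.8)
The plan is to read off $v$ from the ``generic position'' of the pair $(p,q)$ via the polar decomposition of $pq$, and to show that the correction needed to promote this to an honest equivalence $q\sim p$ is supported on projections that are small when applied to $\xi$. First I would set $e=q\wedge(1-p)$ and $f=p\wedge(1-q)$ and put $q'=q-e$, $p'=p-f$; a direct check identifies $q'$ with the range projection of $qpq$ and $p'$ with the range projection of $pqp$, so the polar decomposition $pq=v_0(qpq)^{1/2}$ produces a partial isometry $v_0$ with $v_0^*v_0=q'$ and $v_0v_0^*=p'$. Thus $v_0$ already implements $q'\sim p'$ on the overlap, and the only remaining task is to extend it across the defects $e$ and $f$.

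Two estimates drive everything. First, the defects are small at $\xi$: since $e\le q$ and $e\le 1-p$ we have $eq=e$ and $ep=0$, hence $e=e(q-p)$ and $\|e\xi\|\le\|(p-q)\xi\|$, and symmetrically $f=f(p-q)$ gives $\|f\xi\|\le\|(p-q)\xi\|$. Second, $v_0$ is close to $p$ at $\xi$: from $p-pq=p(p-q)$ one gets $\|(p-pq)\xi\|\le\|(p-q)\xi\|$, while with $h=(qpq)^{1/2}$ the identity $pq-v_0=v_0(h-q')$ together with the operator inequalities $(q'-h)^2\le q'-h$ and $h\ge qpq$ (valid since $\sqrt t\ge t$ on $[0,1]$) yields $\|(pq-v_0)\xi\|^2\le\langle(q-qpq)\xi,\xi\rangle=\|(1-p)(q-p)\xi\|^2\le\|(p-q)\xi\|^2$. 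Combining the two gives $\|(p-v_0)\xi\|\le 2\|(p-q)\xi\|$.

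In the \twoone factor case I would then finish using the cancellation law: since $\tau(q)=\tau(p)$ and $\tau(q')=\tau(p')$ we get $\tau(e)=\tau(f)$, hence $e\sim f$ in the factor, and there is a partial isometry $v_1$ with $v_1^*v_1=e$, $v_1v_1^*=f$. Because $e\perp q'$ and $f\perp p'$, setting $v=v_0+v_1$ produces a partial isometry with $v^*v=q$ and $vv^*=p$, and $v-p=(v_0-p)+v_1$ gives $\|(v-p)\xi\|\le 2\|(p-q)\xi\|+\|v_1\xi\|$; since $v_1=v_1e$ we have $\|v_1\xi\|\le\|e\xi\|\le\|(p-q)\xi\|$, so $\|(v-p)\xi\|\le 3\|(p-q)\xi\|$. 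Taking $\xi=\hat1$ and recalling $\|x\hat1\|=\|x\|_2$ yields exactly the asserted $\|v-p\|_2\le 3\|p-q\|_2$.

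The hard part is the general countably decomposable case, where cancellation is unavailable: in type $\mathrm{III}$ or type $\mathrm I_\infty$ the defects $e$ and $f$ need not be equivalent even when $p\sim q$ (for instance $q=1\sim p$ with $p$ of infinite rank and corank forces $e=1-p\ne 0$ but $f=0$), so there is no $v_1$ between them and $v_0$ genuinely cannot be extended to a full equivalence $q\to p$. Here I would abandon the overlap scaffold and instead build $v$ globally out of the hypothesized equivalence $p\sim q$, using only that $e$ and $f$ are small at $\xi$ so that the particular choice of equivalence on the defect region does not affect the estimate; this is precisely the point handled by Connes' argument, and it is what forces the weaker constant $6$ in the general statement.
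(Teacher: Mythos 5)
The paper does not actually prove this lemma; it is quoted verbatim from Connes \cite{connes1975outer} (with pointers to Christensen), so there is no internal proof to compare your argument against. Your proof of the second assertion (the \twoone-factor case with constant $3$) is complete and correct: the polar decomposition $pq=v_0(qpq)^{1/2}$ produces $v_0$ with $v_0^*v_0=q'=q-q\wedge(1-p)$ and $v_0v_0^*=p'=p-p\wedge(1-q)$; the inequalities $(q'-h)^2\le q'-h$ and $h\ge qpq$ correctly give $\|(pq-v_0)\xi\|\le\|(p-q)\xi\|$, hence $\|(p-v_0)\xi\|\le 2\|(p-q)\xi\|$; and equality of traces forces $\tau(e)=\tau(f)$, so in a \twoone factor the defect piece $v_1$ exists and contributes at most $\|e\xi\|\le\|(p-q)\xi\|$. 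This is in fact the only case the paper ever invokes (Lemma \ref{conj-finite-unitaries} applies the estimate to equal-trace projections in a \twoone factor), and your constant $3$ is sharper than the $6$ used there.

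The first assertion, however --- general countably decomposable $N$, merely equivalent projections, constant $6$ relative to an arbitrary vector $\xi$ --- is not proved. You correctly identify why your construction breaks there (the defects $e=q\wedge(1-p)$ and $f=p\wedge(1-q)$ need not be equivalent even when $p\sim q$, as your $q=1$, $p$ of infinite rank and corank example shows), but you then defer entirely to ``Connes' argument'' without supplying one; no mechanism is offered for exploiting the hypothesized equivalence $p\sim q$ to bridge the defects, which is exactly where the degradation from $3$ to $6$ occurs in \cite{connes1975outer}. As written this half is an acknowledged gap rather than a completed proof. If the lemma is only needed as the paper uses it, you could restrict the statement to the \twoone case and your argument stands on its own; otherwise the general case still requires the actual argument of Lemma 1.1.4 of \cite{connes1975outer}.
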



\begin{lem}\label{conj-finite-unitaries}
    Fix $t>0$ and $0<\kappa < \pi/2.$ Then for all $\ee>0$ there is $\delta>0$ such that whenever $u_1,u_2 \in M$ are unitaries in a \twoone with the same purely atomic spectral measure $\mu$ where each atom of $\mu$ has measure at least $t$ and all distinct eigenvalues of $u_i$ have radian distance between them at least $\kappa$, and $\|u_1-u_2\|_2 < \delta$ then there is a unitary $v\in M$ such that $\|v-1\|_2 < \ee$ and $v^*u_1v = u_2.$
\end{lem}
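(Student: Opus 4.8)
The plan is to diagonalize both unitaries against their common (finite) list of eigenvalues, conjugate the spectral projections one eigenvalue at a time via the Connes estimate of Lemma \ref{connes-estimate}, and assemble the pieces into a single unitary. Write the distinct eigenvalues of $\mu$ as $\lambda_1,\dots,\lambda_m$; since each atom has mass at least $t$ we have $m\le 1/t$. Let $p_j$ and $q_j$ be the spectral projections of $u_1$ and $u_2$ at $\lambda_j$, so that $u_1=\sum_j\lambda_j p_j$, $u_2=\sum_j\lambda_j q_j$, and $\tau(p_j)=\tau(q_j)=\mu(\{\lambda_j\})$. The whole difficulty is to show that $\|p_j-q_j\|_2$ is controlled by $\|u_1-u_2\|_2$ with a constant depending only on $t$ and $\kappa$; once this is done, equality of traces lets me invoke the \twoone case of Lemma \ref{connes-estimate} to produce partial isometries $v_j$ with $v_jv_j^*=p_j$, $v_j^*v_j=q_j$, and $\|v_j-p_j\|_2\le 3\|p_j-q_j\|_2$.

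The key step, and the place where the separation hypothesis $\kappa$ enters, is a uniform Lipschitz bound for the spectral projections. I fix once and for all a smooth function $\phi\colon\T\to[0,1]$ equal to $1$ near $1$ and supported in an arc of radian-length strictly less than $\kappa$, and set $C_\kappa:=\sum_{n\in\Z}|n|\,|\hat\phi(n)|<\infty$ (finite since $\phi$ is smooth). For each $j$ I put $f_j(z)=\phi(\bar\lambda_j z)$; because the $\lambda_l$ are $\kappa$-separated while $\phi$ has support-arc shorter than $\kappa$, I get $f_j(\lambda_l)=\delta_{jl}$, hence $f_j(u_1)=p_j$ and $f_j(u_2)=q_j$. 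The Fourier coefficients of $f_j$ are $\hat\phi(n)\bar\lambda_j^{\,n}$, of the same modulus as those of $\phi$, so using the telescoping bound $\|u_1^{\,n}-u_2^{\,n}\|_2\le |n|\,\|u_1-u_2\|_2$ (valid for all $n\in\Z$ since unitaries preserve the $2$-norm) I obtain
\[
\|p_j-q_j\|_2=\Big\|\sum_{n\in\Z}\hat\phi(n)\bar\lambda_j^{\,n}\,(u_1^{\,n}-u_2^{\,n})\Big\|_2\le C_\kappa\,\|u_1-u_2\|_2 .
\]
The point of rotating one fixed bump onto each eigenvalue is precisely that $C_\kappa$ does not depend on $j$ nor on the actual positions of the eigenvalues.

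Finally I set $v=\sum_j v_j$. Since $\sum_j p_j=\sum_j q_j=1$, the relations $v_jv_j^*=p_j$, $v_j^*v_j=q_j$ give $vv^*=v^*v=1$, so $v$ is unitary; a direct computation using $p_kv_l=\delta_{kl}v_l$ and $v_j^*v_l=\delta_{jl}q_j$ shows $v^*u_1v=\sum_j\lambda_j q_j=u_2$. Using the orthogonality $(v_j-p_j)^*(v_k-p_k)=0$ for $j\ne k$ (since $(v_j-p_j)^*=(v_j-p_j)^*p_j$ while $v_k-p_k=p_k(v_k-p_k)$) I get
\[
\|v-1\|_2^2=\Big\|\sum_j(v_j-p_j)\Big\|_2^2=\sum_j\|v_j-p_j\|_2^2\le 9\sum_j\|p_j-q_j\|_2^2\le \frac{9C_\kappa^2}{t}\,\|u_1-u_2\|_2^2 .
\]
Thus choosing $\delta:=\ee\sqrt{t}/(3C_\kappa)$ forces $\|v-1\|_2<\ee$ whenever $\|u_1-u_2\|_2<\delta$, which completes the argument. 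I expect the main obstacle to be the uniform projection estimate of the second paragraph; everything else is bookkeeping with the Connes estimate and with the orthogonality of the corners $p_jMp_j$.
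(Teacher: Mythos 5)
Your proof is correct, and while it follows the same overall skeleton as the paper's (decompose $u_1,u_2$ into their spectral projections $p_j,q_j$ at the common eigenvalues, produce partial isometries $v_j$ from $p_j$ to $q_j$ via Lemma \ref{connes-estimate}, and sum them into the conjugating unitary $v$, using $m\le 1/t$ to control the number of terms), the one genuinely substantive step --- bounding $\sum_j\|p_j-q_j\|_2^2$ by $\|u_1-u_2\|_2$ uniformly in the eigenvalue configuration --- is handled by a different mechanism. The paper expands $\|u_1-u_2\|_2^2=2-2\Real\tau(u_1^*u_2)$ over the projections and uses $\Real(\overline{\lambda_j}\lambda_k)\le\cos(\kappa)$ for $j\ne k$ to squeeze out $\sum_j\|p_j-q_j\|_2^2\le\delta^2/(1-\cos\kappa)$; you instead rotate a fixed smooth bump $\phi$ supported in an arc of length $<\kappa$ onto each eigenvalue, recover $p_j=f_j(u_1)$, $q_j=f_j(u_2)$, and use the absolutely convergent Fourier series together with the telescoping bound $\|u_1^n-u_2^n\|_2\le|n|\,\|u_1-u_2\|_2$ to get $\|p_j-q_j\|_2\le C_\kappa\|u_1-u_2\|_2$. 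Both are complete; the paper's computation is more elementary and self-contained, while yours is a reusable operator-Lipschitz-type estimate and actually yields a quantitatively stronger conclusion, namely $\|v-1\|_2\le (3C_\kappa/\sqrt{t})\,\|u_1-u_2\|_2$, i.e.\ a linear dependence of $\ee$ on $\delta$, whereas the paper's final Cauchy--Schwarz step only gives $\|v-1\|_2\lesssim\sqrt{\delta}$. All the bookkeeping (unitarity of $v=\sum_j v_j$, the intertwining $v^*u_1v=u_2$, and the orthogonality of the $v_j-p_j$) checks out.
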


\begin{proof}
    Let $\ee>0$ and set choose $\delta > 0$ such that $\delta < \frac{\ee^2}{6}\sqrt{t(1-\cos(\kappa)}$.

    Let $u_1,u_2\in M$ be unitaries as in the statement of the lemma. Write $u_i = \sum_{j=1}^n\lambda_jp_{ij}$. We compute that 
    \begin{align*}
        \|u_1-u_2\|_2^2 &= 2-2\Real\tau(u_1^*u_2) \\
        &= 2-2\Real \sum_{j,k=1}^n \overline{\lambda_j}\lambda_k \tau(p_{ij}p_{2k})\\
        &= 2-2\sum_{j=1}^n\tau(p_{ij}p_{2j}) - 2\Real \sum_{j\neq k}\overline{\lambda_j}\lambda_k \tau(p_{ij}p_{2k})
    \end{align*}

    Note that by hypothesis, for $j\neq k$ $\mathrm{Re}(\overline{\lambda_j}\lambda_k) \leq \cos(\kappa).$ Hence
    \begin{align*}
        \|u_1-u_2\|_2^2&\geq 2 - 2\sum_{j=1}^n\tau(p_{ij}p_{2j}) - 2\Real \sum_{j\neq k}\cos(\kappa) \tau(p_{ij}p_{2k})\\
        &= 2 - 2\cos(\kappa) - 2(1-\cos(\kappa))\sum_{j=1}^n\tau(p_{ij}p_{2j})
    \end{align*}

    By hypothesis $\|u_1-u_2\|_2 < \delta,$ so that 
    $$\sum_{j=1}^n\tau(p_{ij}p_{2j}) \geq 1 - \frac{\delta^2}{2-2\cos(\kappa)}.$$
    This in turn implies that, since $\tau(p_{1j}) = \tau(p_{2j})$ for all $j,$
    $$\sum_{j=1}^n \|p_{1j}-p_{2j}\|_2^2 \leq \frac{\delta^2}{1-\cos(\kappa)}. $$

    By Lemma \ref{connes-estimate} there are partial isometries $v_j$ such that $v_jv_j^* = p_{1j}$, $v_j^*v_j = p_{2j}$, and $\|v_j-p_{1j}\|_2 \leq 6\|p_{1j}-p_{2j}\|_2$. Set $v = \sum_{j=1}^n v_j$. Then $v$ is a unitary such that $v^*u_1v = u_2$.

    Then we can write \begin{align*}
        \|v-1\|_2^2 &= 2-2\Real\tau(v) \\
        &= 2-2\Real \sum_{j=1}^n\tau(v_j)\\
        &= 2\Real \sum_{j=1}^n\tau(p_{1j}-v_j).
    \end{align*}
    Applying the Cauchy-Schwarz inequality and the fact that $n\leq 1/t$ (otherwise the atoms of $\mu$ would have total weight greater than 1), we get that 
    $$\|v-1\|_2^2 \leq 2\sqrt{n}\left(\sum_{j=1}^n\|p_{1j}-v_j\|_2^2\right)^{1/2} \leq 6\sqrt{n}\left(\sum_{j=1}^n\|p_{1j}-p_{2j}\|_2^2\right)^{1/2}\leq\frac{6\delta}{\sqrt{t(1-\cos(\kappa))}} < \ee^2,$$
    completing the proof.
\end{proof}

We now use our technical lemmas to characterize commutation of Haar unitaries in an ultrapower of $M$ by large-dimensional unitaries in $M.$

\begin{lem}\label{lift-to-commuting-almost-haars}
    Let $M$ be a \twoone factor. If $u,v \in M^\cU$ are commuting Haar unitaries then for any lift $u_n$ of $u$ to Haar unitaries and any $\ee>0$ there is a lift of $v$ to Haar unitaries $v_n$ and there are finite-dimensional unitaries $w_n \in M$ s.t. all atoms of the spectral measure of $w_n$ have measure less than $\ee$ and $[u_n,w_n]=[w_n,v_n] = 0.$ Moreover, for any $N$ with $\frac{1}{N} < \ee$, we may choose each $w_n$ so that $w_n \in W^*(u_n)$, $\|u_n-w_n\|<\ee\pi$ and all atoms of $w_n$ have measure exactly $\frac{1}{N}$.
\end{lem}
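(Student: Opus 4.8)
The plan is to realize the middle unitary $w$ as a spectral ``rounding'' of $u$, and then obtain the commuting Haar lift of $v$ by applying the Haar lifting lemma \emph{inside the relative commutant of $w$}.

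First I would construct $w$ explicitly. Partition the circle $\T$ into $N$ arcs $A_1,\dots,A_N$ of equal length $2\pi/N$ centered at the $N$-th roots of unity $\lambda_1,\dots,\lambda_N$, and let $g\colon\T\to\T$ be the Borel step function sending $A_j$ to $\lambda_j$. For $z=e^{i\theta}\in A_j$ one has $|z-g(z)|=2|\sin((\theta-\theta_j)/2)|\le 2\sin(\pi/(2N))<\pi/N$, so $\sup_{z\in\T}|z-g(z)|<\pi/N$. Set $w=g(u)$ and $w_n=g(u_n)$. Since $u$ and every $u_n$ are Haar unitaries, they share the same spectral measure (Haar measure on $\T$), so Lemma \ref{borel-map-of-lift} shows $(w_n)_n$ is a lift of $w$. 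As $u_n$ is Haar, each arc has measure $1/N$, so the spectral projection $p_{n,j}=1_{A_j}(u_n)$ has $\tau(p_{n,j})=1/N$; hence $w_n$ is finite-dimensional, lies in $W^*(u_n)$, has all atoms of measure exactly $1/N<\ee$, satisfies $\|u_n-w_n\|=\sup_{z\in\T}|z-g(z)|<\ee\pi$, and trivially $[u_n,w_n]=0$. This already delivers all the ``moreover'' conclusions.

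It remains to produce the Haar lift $v_n$ of $v$ commuting with $w_n$, and the key step is to pass to the relative commutant. Let $B_n=W^*(w_n)'\cap M=\bigoplus_{j=1}^N p_{n,j}Mp_{n,j}$; each $B_n$ is a finite direct sum of II$_1$ factors, hence a diffuse tracial von Neumann algebra. I claim the natural embedding identifies $\prod_\cU B_n$ with $W^*(w)'\cap M^\cU$. Writing $P_j=(p_{n,j})_n$ (a valid lift, by Lemma \ref{borel-map-of-lift} applied to $u_n$ and the indicator $1_{A_j}$, so that $P_j=1_{A_j}(u)$ is the spectral projection of $w$ at $\lambda_j$), any $x\in M^\cU$ commuting with $w$ commutes with every $P_j$, whence $x=\sum_{j}P_jxP_j=(\sum_{j}p_{n,j}x_np_{n,j})_n$ is represented by a sequence in $B_n$; the reverse inclusion is immediate. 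Since $u$ and $v$ commute and $w=g(u)\in W^*(u)$, the Haar unitary $v$ commutes with $w$ and therefore lies in $\prod_\cU B_n$, where it is still a Haar unitary because Haarness depends only on the moments, computed with the same trace $\tau$. Now Lemma \ref{Haar lifting}, applied to the diffuse algebras $B_n$, produces Haar unitaries $v_n\in B_n$ with $v=(v_n)_n$. Each $v_n\in W^*(w_n)'\cap M$, so $[w_n,v_n]=0$, and being Haar with respect to $\tau|_{B_n}=\tau$ it is a Haar unitary in $M$; this is the required lift.

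The step I expect to be the main obstacle, and the place where the hypotheses are genuinely used, is the factorization $W^*(w)'\cap M^\cU=\prod_\cU B_n$ with each $B_n$ diffuse. The naive move of compressing an arbitrary lift of $v$ to the blocks $p_{n,j}\cdot p_{n,j}$ only yields a sequence that is asymptotically unitary and asymptotically Haar, not exactly so. What rescues the argument is that $w$ was chosen finite-dimensional with minimal projections of constant trace exactly $1/N$: finiteness makes the commutant block-diagonal, and the equal atom sizes make the corner identity $P_jM^\cU P_j=\prod_\cU(p_{n,j}Mp_{n,j})$ hold on the nose, so that the relative commutant of $w$ is honestly an ultraproduct of diffuse algebras and Lemma \ref{Haar lifting} can be invoked there to upgrade ``asymptotically Haar and commuting'' to ``exactly Haar and exactly commuting.'' Thus the entire difficulty is concentrated in the choice of $w$: finite-dimensional so its commutant splits, with equal-measure atoms so the pieces pass cleanly through the ultraproduct.
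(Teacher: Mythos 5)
Your proof is correct, and your construction of $w_n$ as an $N$-uniform spectral discretization $g(u_n)$ of the given Haar lift is exactly the paper's first step; where you genuinely diverge is in producing the commuting Haar lift of $v$. The paper lifts the commuting pair $(w,v)$ jointly: since $W^*(w,v)$ is diffuse abelian, Lemma \ref{lem-normal-lifting} yields commuting lifts $x_n, v_n$ with $x_n$ having the same atomic, equal-weight spectral measure as $w$ and with $v_n$ Haar; then, since $(x_n)_n$ and $(w_n)_n$ are both lifts of $w$, the perturbation result Lemma \ref{conj-finite-unitaries} (which rests on Connes' partial isometry estimate, Lemma \ref{connes-estimate}) combined with a diagonal argument over a decreasing sequence of sets witnessing the countable incompleteness of $\cU$ produces unitaries $y_n$ with $\|y_n-1\|_2\to 0$ and $y_n^*x_ny_n = w_n$, so that $(y_n^*v_ny_n)_n$ is the desired lift. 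Your route through the identification $W^*(w)'\cap M^\cU = \prod_\cU\bigl(W^*(w_n)'\cap M\bigr)$ bypasses the conjugation step entirely: it needs no quantitative perturbation estimate and no appeal to countable incompleteness, only the fact that the spectral projections of $w$ lift exactly to those of $w_n$ (Lemma \ref{borel-map-of-lift}), after which the relative commutant splits as an ultraproduct of diffuse direct sums of corners and Lemma \ref{Haar lifting} applies there verbatim. This is arguably the cleaner argument for this lemma; what the paper's approach buys is the standalone statement Lemma \ref{conj-finite-unitaries}, which is of independent use. One small correction of emphasis: the equal atom sizes are not what makes the corner identity $P_jM^\cU P_j=\prod_\cU(p_{n,j}Mp_{n,j})$ hold --- any exact lift of the spectral projections suffices for that --- rather they are needed for the ``moreover'' clause of the statement and, in the paper's route, to satisfy the hypotheses of Lemma \ref{conj-finite-unitaries}.
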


\begin{proof}
    Fix $\ee>0.$ Let $N$ be such that $1/N < \ee.$ Let $f:\T\to\T $ be an \emph{$N$-uniform discretization}, a Borel function with finite image such that the measure of the preimage of each point in the range is an arc in $\T$ with measure equal to $1/N.$ Set $w_n = f(u_n)$ and $w = f(u)$ so that $(w_n)_n$ is a lift of $w,$ $w_n$ commutes with $u_n$ (in fact $\|u_n-w_n\| < \ee\pi$ too if one chooses $f$ appropriately) and $w$ commutes with $v.$ 

    Since $w$ and $v$ commutes in $M^\cU,$ they generate a diffuse abelian subalgebra. Lemma \ref{lem-normal-lifting} guarantees commuting unitary lifts $x_n$ and $v_n$ of $w,v$ respectively such that $x_n$ has the same moments as $w$ and the $v_n$ are all Haar.

    Set $t = 1/N$ and $\kappa = 2\pi/N.$ For $\ee = 1/k$, $k$ an integer, choose $\delta_k>0$ as guaranteed by Lemma \ref{conj-finite-unitaries}. Without loss of generality we can choose the $\delta_k$ to be decreasing to 0. Since $(w_n)_n$ and $(x_n)_n$ are both lifts of $w$, the decreasing sequence of sets $A_k = \{n\in\N : \|w_n-x_n\|_2 < \delta_k\}$ are all in $\cU.$ Since $\cU$ is countably incomplete, we may choose a sequence of sets $B_k \in \cU$ decreasing to $\varnothing$, so $A'_k = A_k \cap B_k$ is a decreasing sequence of sets in $\cU$ with $\cap_k A'_k = \varnothing$. For $n \in A'_k \setminus A'_{k+1}$, apply Lemma \ref{conj-finite-unitaries} to get unitaries $y_n$ such that $\|y_n-1\|_2 < 1/k$ and $y_n^*x_ny_n = w_n.$ For $n \notin A_1$, let $y_n = 1$. Then as $\cap_k A'_k = \varnothing$, we have defined $y_n$ for all $n$. We note that $\|y_n-1\|_2 \to 0$ as $n\to\cU$ and so $(y_n^*v_ny_n)_n$ is a Haar lifting of $v.$ Then the sequences of unitaries $(w_n)_n$ and $(y_n^*v_ny_n)_n$ satisfy the conclusion of the lemma.
\end{proof}


\begin{defn}\label{seq-comm-def} The following are three notions of sequentially commutation in a tracial von Neumann algebra $(M,\tau)$. The first two conditions appear in \cite{patchellelayavalli2023sequential}. Throughout, let $u,v\in M$ be Haar unitaries.

\begin{enumerate}
    \item[(a)]  We write $u\sim_k v$ if there exist Haar unitaries $u=w_0,w_1,\ldots,w_k=v$ such that $w_i\in M$ for each $i$ and $[w_i,w_{i+1}]=0$ for $i=0,\ldots,k-1.$  We write $u\sim v$  if there exists $k$ such that $u\sim_k v$.
    \item[(b)]  We write $u\sim_k^\cU v$ if there exist Haar unitaries $u=w_0,w_1,\ldots,w_k=v$ such that $w_i\in M^\cU$ for each $i$ and $[w_i,w_{i+1}]=0$ for $i=0,\ldots,k-1.$  We write $u\sim^\cU v$ and say that $u,v\in M$ \emph{sequentially commute} if there exists $k$ such that $u\sim_k^\cU v$.
    \item[(c)]  We write $u\sim_k^+v$ if there exists $k$ so that for all $\ee>0$ there exist Haar unitaries $u=w_0,w_1,\ldots, w_k=v$ and unitaries $x_1,\ldots,x_{k-1}$, all in $M,$ such that, for each $i$, $\|x_i-w_i\|<\ee$, $x_i$ is finite-dimensional, each atom of each $x_i$ have the same measure, $[u,x_1]=[x_{k-1},v]=0,$ and $[x_i,x_{i+1}] = 0.$ We write $u\sim^+ v$ and say that $u,v\in M$ \emph{internally sequentially commute} if there exists $k$ such that $u\sim_k^+ v$.
\end{enumerate}
\end{defn}

We will now show that (b) and (c) are related, meaning we can characterize sequential commutation in the sense of \cite{patchellelayavalli2023sequential} via actually commuting unitaries without appealing to an ultrapower. We will then see that (a) is a distinct criterion.

\begin{prop}\label{equivalence of internal and external}
    Let $M$ be a \twoone factor and let $u,v\in M$ be Haar unitaries. Then $u\sim^\cU v$ if and only if there exists $k\in \mathbb{N}$ and a sequence $(v_n)_n$ of Haar unitaries in $M$ converging in the SOT to $v$ such that $u\sim^+_k v_n$ for all $n.$ 
\end{prop}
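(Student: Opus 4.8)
The plan is to prove both directions by leveraging the two technical lemmas that were just established, namely Lemma~\ref{lift-to-commuting-almost-haars} (which turns a commuting pair of Haar unitaries in $M^\cU$ into a sequence of genuinely commuting finite-dimensional approximants in $M$) and Lemma~\ref{lem-normal-lifting} (which provides moment-preserving lifts of normal elements). The key conceptual point is that $\sim^\cU$ is defined via a finite chain $u=w_0,\dots,w_k=v$ of Haar unitaries living in the ultrapower $M^\cU$, whereas $\sim^+$ is an internal condition in $M$ using honestly finite-dimensional commuting approximants; the bridge between them is the observation that an ultrapower chain can be realized, index by index, as a chain of genuinely commuting unitaries in $M$ after an arbitrarily small perturbation, and conversely a sequence of internal chains can be assembled into a single ultrapower chain by taking the element represented by the approximating sequences.

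For the forward direction, suppose $u\sim^\cU_k v$, so there is a chain $u=w_0,w_1,\dots,w_k=v$ of Haar unitaries in $M^\cU$ with consecutive terms commuting. The idea is to walk along the chain applying Lemma~\ref{lift-to-commuting-almost-haars} repeatedly. Starting from the honest lift $u_n\equiv u$, that lemma lets me replace $w_1$ by a finite-dimensional unitary $x^{(1)}_n\in M$ (with small, equal-measure atoms) that honestly commutes with $u$, and simultaneously produces a Haar lift of $w_2$ commuting with $x^{(1)}_n$; iterating, I obtain for each $n$ finite-dimensional unitaries $x^{(1)}_n,\dots,x^{(k-1)}_n\in M$ and a Haar unitary $v_n\in M$ (the final lift of $w_k=v$) such that $[u,x^{(1)}_n]=[x^{(i)}_n,x^{(i+1)}_n]=[x^{(k-1)}_n,v_n]=0$, with the $x^{(i)}_n$ close in operator norm to Haar unitaries lifting the $w_i$. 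Because the lifts converge to $v$ along $\cU$, one extracts an honest sequence $(v_n)_n$ converging SOT to $v$ with $u\sim^+_k v_n$ for each $n$. Some care is needed to thread the ``moreover'' clause of Lemma~\ref{lift-to-commuting-almost-haars} through the induction so that the equal-atom-measure and norm-closeness requirements in condition~(c) are met at every stage.

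For the reverse direction, suppose there is $k$ and a sequence $(v_n)_n$ of Haar unitaries in $M$ with $v_n\to v$ in SOT and $u\sim^+_k v_n$ for all $n$. Fix $n$; then for each $\ee=1/m$ there are Haar unitaries $u=w_0^{(n,m)},\dots,w_k^{(n,m)}=v_n$ and finite-dimensional commuting approximants $x^{(n,m)}_i$ in $M$ realizing condition~(c). The natural move is to form the element of $M^\cU$ (or an iterated ultrapower, which one can identify with $M^\cU$ since $\cU$ is fixed and $M$ separable) whose lift is a suitably diagonal selection of these approximants: passing to the ultrapower, the finite-dimensional unitaries $x^{(n,m)}_i$ become genuine Haar unitaries $W_i\in M^\cU$ (using that their small equal-measure atoms force the spectral measure to converge to Haar measure on $\T$ as $m\to\infty$), and their honest commutation in $M$ passes to commutation of the $W_i$ in $M^\cU$, yielding a sequential commutation chain from $u$ to $v_n$ of length $k$. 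Finally, since $v_n\to v$ in SOT and $\sim^\cU$ is known (from \cite{patchellelayavalli2023sequential}) to be closed under such limits when the chain length is uniformly bounded, one concludes $u\sim^\cU v$.

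I expect the main obstacle to be the reverse direction, specifically the step where the finite-dimensional approximants $x_i$ must be promoted to \emph{Haar} unitaries in the ultrapower: the definition of $\sim^\cU$ insists the intermediate $w_i$ be Haar, so I must verify that the equal-measure small-atom condition in~(c), combined with letting $\ee\to 0$ along the ultrafilter, produces spectral measures converging weakly to the uniform (Haar) measure on $\T$, and that commutation survives the limit. A secondary subtlety is the passage from ``$u\sim^+_k v_n$ for all $n$ with $v_n\to v$'' to ``$u\sim^\cU v$''; this requires a diagonalization across both the index $n$ and the perturbation parameter $\ee$, together with the SOT-closedness of the bounded-length sequential commutation relation, which is exactly why the proposition phrases the internal condition in terms of an approximating \emph{sequence} $(v_n)_n$ rather than $v$ itself.
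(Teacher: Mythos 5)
Your overall architecture matches the paper's: the direction $u\sim^\cU v\Rightarrow{}$internal is handled by iterating Lemma~\ref{lift-to-commuting-almost-haars} along the chain exactly as in the paper, and the other direction assembles the internal approximants into a chain in $M^\cU$. However, one step of your reverse direction is justified incorrectly. You claim the finite-dimensional unitaries $x^{(n,m)}_i$ become Haar in the ultrapower because ``their small equal-measure atoms force the spectral measure to converge to Haar measure on $\T$.'' This is false as stated: Definition~\ref{seq-comm-def}(c) only requires the atoms to have \emph{equal} measure, not that the eigenvalues be spread out, and a unitary with $N$ atoms of measure $1/N$ whose eigenvalues all sit in a tiny arc near $1$ has spectral measure nowhere near Haar measure. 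The correct (and much shorter) reason is already built into the definition of $\sim^+_k$: each $x_i$ comes with an honest Haar unitary $w_i\in M$ satisfying $\|x_i-w_i\|<\ee$, so taking $\ee=1/m$ gives $(x_i^{(m)})_m=(w_i^{(m)})_m$ in $M^\cU$, and the latter is Haar because each $w_i^{(m)}$ is. You should replace your spectral-measure argument by this observation.

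A second, structural difference: for the reverse direction you first establish $u\sim^\cU_k v_n$ for each fixed $n$ and then invoke an SOT-closedness property of bounded-length sequential commutation (attributed to \cite{patchellelayavalli2023sequential}) to pass to $v$. The paper avoids any such external closure result by a single diagonalization: for each $n$ it selects the chain witnessing $u\sim^+_k v_n$ with $\ee=1/n$, sets $x_i=(x_{i,n})_n\in M^\cU$, and uses that $v=(v_n)_n$ in $M^\cU$ since $v_n\to v$ in SOT; the chain $u,x_1,\dots,x_{k-1},v$ then directly witnesses $u\sim^\cU v$. Your route can be made rigorous (e.g.\ via an iterated ultrapower together with ultrafilter-independence of $\sim^\cU$), but it imports machinery that the direct diagonalization renders unnecessary, and you should either prove or precisely cite the closure property if you keep it.
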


\begin{proof}
    Suppose there are Haars $v_n$ converging to $v$ in the SOT and $k$ so that $u\sim_k^+ v_n$ for all $n.$ Then for each $n\geq 1$ and $i=1,\ldots,k-1$ there are finite-dimensional unitaries $x_{i,n}$ such that $[u,x_{1,n}] = [x_{i,n},x_{i+1,n}]=[x_{k-1,n},v_n] = 0$ and $\|x_{i,n}-w_{i,n}\| < 1/n$ for some Haar unitary $w_{i,n}\in M$. Set $x_i = (x_{i,n})_n \in M^\cU.$ Then each $x_i$ is a Haar unitary, and since $v = (v_n)_n$ as elements of $M^\cU,$ we have that $[u,x_1] = [x_i,x_{i+1}] = [x_{k-1},v]$ as required. Hence $u\sim^\cU v.$

    Conversely, suppose $u\sim^\cU v$. Then there is $k$ and Haar unitaries $u=w_0,w_1,\ldots,w_k=v$ in $M^\cU$ as in Definition \ref{seq-comm-def}(b). Fix $\ee>0$ and apply Lemma \ref{lift-to-commuting-almost-haars} repeatedly to get Haar lifts $w_i = (w_{i,n})_n$ and unitaries $x_{i,n} \in W^*(w_{i,n})$ so that $\|w_{i,n}-x_{i,n}\|<\ee$ and $[x_{i,n},w_{i+1,n}] = 0$ for all $n$ and $i=0,\ldots,k-1.$ In particular, we note that $[x_{i,n},x_{i+1,n}]=0$ too. The lift of $u=w_0$ can be chosen to be the constant sequence $(u)_n$ and the lift of $v=w_k$ gives Haar unitaries $w_{k,n}$ which converge to $v$ in the SOT. 
\end{proof}

\begin{cor}
    Let $(M,\tau)$ be a \twoone factor. Then $u\sim^\cU v$ for all Haar unitaries $u,v\in M$ if and only if there is an SOT-dense subset $D\subset \cH(M)$ of the Haar unitaries of $M$ such that $u\sim^+v$ for all $u,v\in D.$ Furthermore, $D$ can be chosen to contain any particular Haar unitary $u_0 \in M.$
\end{cor}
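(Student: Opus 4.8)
The plan is to prove the corollary by applying Proposition~\ref{equivalence of internal and external} in both directions, together with a countability/diagonalization argument to handle the passage from a single pair to an SOT-dense subset. The statement to prove is an equivalence: the property that \emph{every} pair of Haar unitaries sequentially commutes ($\sim^\cU$) is equivalent to the existence of an SOT-dense subset $D\subseteq \cH(M)$ on which internal sequential commutation ($\sim^+$) holds for every pair, and moreover $D$ can be arranged to contain any prescribed Haar unitary $u_0$.

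\textbf{Forward direction.} Suppose $u\sim^\cU v$ for all Haar unitaries $u,v\in M$. I would fix the prescribed Haar unitary $u_0$ and build $D$ as a countable SOT-dense set of Haar unitaries containing $u_0$; such a set exists because $M$ is separable and $\cH(M)$ is SOT-dense in the unitary group (indeed, by perturbing any unitary by a small finite-dimensional Haar-valued functional-calculus one can approximate it by Haar unitaries in SOT). Enumerate $D=\{u_0,d_1,d_2,\dots\}$. For each pair $(d_i,d_j)$ we have $d_i\sim^\cU d_j$, so by Proposition~\ref{equivalence of internal and external} there exists $k_{ij}$ and a sequence of Haar unitaries SOT-converging to $d_j$ with $d_i\sim^+_{k_{ij}}$ each term. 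The obstacle here is that the proposition only yields $\sim^+$ against an SOT-approximating sequence of $d_j$, not against $d_j$ itself. To genuinely get $u\sim^+ v$ for $u,v\in D$, I would instead \emph{construct} $D$ adaptively: rather than fixing $D$ first, build it as the union of countably many SOT-approximating sequences produced by the proposition, using a diagonal enumeration so that the resulting set is SOT-dense, contains $u_0$, and is closed under the relevant pairings with $\sim^+$. Concretely, start a dense sequence, and for each pair replace the second element by a nearby term of the approximating sequence, iterating to a countable dense limit set on which all pairs satisfy $\sim^+$.

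\textbf{Reverse direction.} Suppose such a dense $D$ with $\sim^+$ on all pairs exists. Given arbitrary Haar unitaries $u,v\in M$, I want $u\sim^\cU v$. Since $\sim^\cU$ is an equivalence relation (transitivity via concatenation of commuting chains in $M^\cU$, using that the relevant intermediate unitaries remain Haar), it suffices to show every Haar unitary is $\sim^\cU$-equivalent to a single fixed element of $D$, say $u_0$. Pick $u\in\cH(M)$ and choose $d\in D$ with $d$ close to $u$ in SOT; note $d\sim^+ u_0$ gives $d\sim^\cU u_0$ by the easy direction of Proposition~\ref{equivalence of internal and external} (a single $\sim^+$ pair lifts to a commuting chain in $M^\cU$). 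It then remains to connect $u$ to $d$. Here I would again invoke Proposition~\ref{equivalence of internal and external}: density of $D$ combined with the proposition's characterization ("there exists a sequence SOT-converging to $v$ with $u\sim^+_k$ each term") lets me conclude $u\sim^\cU d$ once I know $u$ is an SOT-limit of elements each $\sim^+$-related to $u$ along $D$; more carefully, I would show the $\sim^\cU$-class is SOT-closed among Haar unitaries, so that SOT-density of a single $\sim^\cU$-class forces it to be everything.

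\textbf{Main obstacle.} The crux is the gap between "$\sim^+$ holds against an SOT-approximating sequence" (what the proposition delivers) and "$\sim^+$ holds for the pair itself" (what membership in $D$ demands). Resolving this requires either an SOT-closedness property of the $\sim^\cU$-equivalence class among Haar unitaries, or a careful diagonal construction of $D$ as a limiting dense set rather than an arbitrary prescribed one; I expect the cleanest route is to prove that the single $\sim^\cU$-orbit is SOT-dense in $\cH(M)$ and invoke a closedness argument for $\sim^\cU$ to upgrade density to equality. Threading the requirement that $u_0\in D$ through this construction is a bookkeeping matter: one simply seeds the diagonal enumeration with $u_0$ and verifies that the approximation steps never need to remove it, since $u_0\sim^\cU u_0$ trivially via Proposition~\ref{equivalence of internal and external}.
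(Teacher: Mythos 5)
The paper states this corollary without proof, as an immediate consequence of Proposition \ref{equivalence of internal and external}; so what matters is whether your argument actually closes the two gaps you yourself identify, and it does not. In the forward direction, your ``adaptive/diagonal'' construction of $D$ is not an argument: replacing the second entry of a pair $(d_i,d_j)$ by a nearby term $d_j'$ of the approximating sequence supplied by the Proposition repairs that one pair but destroys every other pair involving $d_j$, and you give no reason the iteration stabilizes to a set on which \emph{all} pairs are $\sim^+$-related. The missing idea is that $\sim^+$ is symmetric and transitive: if $u\sim^+_k v$ and $v\sim^+_l w$, splice the two chains at $v$, inserting as the extra finite-dimensional unitary a uniform discretization $f(v)\in W^*(v)$ with small equal atoms; since the adjacent $x$'s commute with $v$, they commute with $f(v)$, so $u\sim^+_{k+l}w$. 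With transitivity in hand one simply takes $D=\{w\in\cH(M): u_0\sim^+ w\}$ (which contains $u_0$, since $u_0\sim^+_2 u_0$ via $x_1=f(u_0)$): all pairs in $D$ are $\sim^+$-related automatically, and $D$ is SOT-dense because for every Haar $v$ the Proposition applied to $u_0\sim^\cU v$ produces a sequence in $D$ converging to $v$. No diagonalization is needed.

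In the reverse direction your argument rests on the claim that a $\sim^\cU$-class is SOT-closed among Haar unitaries, which you do not prove and which is not available from the tools at hand. The correct first step is the one you make: $\sim^+$ implies $\sim^\cU$ (form $X_i=(x_{i,n})_n\in M^\cU$ from the $\ee=1/n$ witnesses; these are Haar because they are $2$-norm limits of Haars), so $D$ lies in a single $\sim^\cU$-class. But to pass from $D$ to an arbitrary Haar unitary $u$ via the ``if'' direction of Proposition \ref{equivalence of internal and external}, you need a \emph{single} $k$ with $d\sim^+_k e_n$ for a sequence $e_n\to u$ in $D$; the hypothesis only yields $d\sim^+_{k_n}e_n$ with $k_n$ depending on $n$. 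The full orbit $\{v: d\sim^\cU v\}=\bigcup_k\{v:d\sim^\cU_k v\}$ is only a countable increasing union of ($2$-norm closed) sets, and a dense set of this form need not be everything, so ``density plus closedness'' is exactly the step that fails without a uniform bound on chain lengths. Any complete proof of this direction must either extract such a uniform bound (e.g.\ by a padding-plus-pigeonhole argument restricted to a suitable neighborhood of $u$) or proceed differently; as written, this is a genuine gap.
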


In \cite{PopaWeakInter}, it is shown that separable \twoone factors always contain coarse MASAs; i.e., if $M$ is a separable \twoone factor then it contains a maximal abelian subalgebra $A$ such that the $A$-$A$ bimodule $L^2(M)\ominus L^2(A)$ is a multiple of the coarse bimodule $L^2(A)\otimes L^2(A).$ We can use coarseness to show that Definition \ref{seq-comm-def}(a) is distinct from Definition \ref{seq-comm-def}(b), namely, sequential commutation for separable \twoone factors. Note that many separable \twoone factors have only one $\sim^\cU$-orbit, such as any factor with Property Gamma. We remark that moreover statement of the Proposition below is already known and is documented in \cite{sorinsingular}. We thank Adrian Ioana for suggesting the idea for the following proof. 

\begin{prop}\label{thm-continuum-orbits}
    If $M$ is a separable \twoone factor, then $\sim$ has continuum many orbits. Moreover, there are continuum many Haar unitaries in $M$ which are pairwise non-conjugate.  
\end{prop}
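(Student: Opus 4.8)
The plan is to exploit a coarse MASA $A\subseteq M$, which exists by \cite{PopaWeakInter}, together with its singularity and mixing. First I would isolate the two structural consequences of coarseness that drive everything: (i) $A$ is singular, so $N_M(A)=\mathcal{U}(A)$; and (ii) $A$ is mixing, so that every diffuse von Neumann subalgebra $B\subseteq A$ satisfies $B'\cap M=A$. Property (ii) follows from the mixing estimate $\|E_A(a u_n b)\|_2\to 0$ for $a,b\in M\ominus A$ and $u_n\in\mathcal{U}(A)$ tending weakly to $0$: if $x\in B'\cap M$ and $x_0=x-E_A(x)$, then $x_0=u_nx_0u_n^*$ for $u_n\in\mathcal{U}(B)$, and choosing such $u_n$ weakly null forces $\|x_0\|_2^2=\tau(x_0^*u_nx_0u_n^*)\to 0$, whence $x\in A$.

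The heart of the argument is a trapping lemma: if $u\in\cH(A)$ and $v\in\cH(M)$ with $u\sim v$, then $v\in A$. Given a commuting chain of Haar unitaries $u=w_0,w_1,\ldots,w_k=v$, each $w_i$ generates a diffuse abelian subalgebra, and an induction using (ii) shows that once $w_i\in A$ we have $w_{i+1}\in\{w_i\}'\cap M=W^*(w_i)'\cap M=A$; the base case $w_0=u\in A$ starts the induction. Conversely, all Haar unitaries of the abelian algebra $A$ pairwise commute, hence are mutually $\sim_1$-related, so the $\sim$-orbit of any Haar unitary in $A$ is exactly $\cH(A)$. Since $\sim$ is conjugation-equivariant and conjugates of coarse MASAs are coarse, the orbit of $w^*uw$ is $\cH(w^*Aw)$, and by singularity two such orbits coincide if and only if $w^*Aw$ agree, i.e. if and only if the conjugating unitaries lie in the same coset of $\mathcal{U}(A)$.

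It then remains to produce continuum many conjugates of $A$, i.e. to show the coset space $\mathcal{U}(A)\backslash\mathcal{U}(M)$ is uncountable. Here I would use that $\mathcal{U}(M)$ is a Polish group in $\|\cdot\|_2$ and $\mathcal{U}(A)$ a closed subgroup, so the coset space is Polish and hence of cardinality $\aleph_0$ or $\mathfrak{c}$; a Baire category argument rules out the countable case, since countably many closed cosets covering the connected group $\mathcal{U}(M)$ would force $\mathcal{U}(A)$ to be open and therefore equal to $\mathcal{U}(M)$, contradicting $A\neq M$. This yields continuum many distinct $\sim$-orbits. For the second assertion, the unitaries $u_\theta=e^{i\theta}u_0$ with $\theta\in[0,2\pi)$, where $u_0$ generates $A$, are Haar, generate $A$, and are pairwise non-conjugate: any $w$ with $w^*u_\theta w=u_{\theta'}$ conjugates $A=W^*(u_\theta)$ onto $A=W^*(u_{\theta'})$, so $w\in N_M(A)=\mathcal{U}(A)$ and hence $u_{\theta'}=u_\theta$.

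The step I expect to be the main obstacle is establishing the relative commutant property (ii) at the right level of generality, together with verifying that coarseness indeed supplies the mixing estimate used above; once (i) and (ii) are in hand, the trapping lemma and the cardinality bookkeeping via the Baire/Polish dichotomy are routine.
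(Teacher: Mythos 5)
Your proposal is correct, and its core mechanism is the same as the paper's: take a coarse MASA $A\subseteq M$ from \cite{PopaWeakInter} and use coarseness to ``trap'' any $\sim$-chain starting at a Haar unitary of $A$ inside $A$, so that the $\sim$-orbit of such a unitary is exactly $\cH(A)$. Where you diverge is in how the two halves of that trapping are justified and, more substantially, in how continuum many orbits are then produced. The paper proves a single statement directly from the bimodule structure --- if $u\in\cH(A)$ and $v^*uv\in A$ then $e(v)=0$, i.e.\ $v\in A$ --- via the computation $u^n e(v)=e(v)v^*u^nv$ and weak nullity of $u^n$ on the coarse bimodule; this one computation simultaneously yields your mixing-based relative commutant property (ii) (take $v$ commuting with $u$), the singularity (i) that you invoke without proof, and the non-conjugacy of $\lambda u$ and $\mu u$ used for the ``moreover.'' You would do well to note that (i) and (ii) both reduce to this single estimate rather than treating them as separate imported facts. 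For the count, the paper exhibits an explicit one-parameter family: starting from a Haar $v\notin A$, it sets $v_t=e^{ith}$, uses the group property to find an interval on which $v_s^*v_t\in A$ only if $s=t$, and concludes that the orbits of $v_tuv_t^*$ are pairwise distinct; you instead count left cosets of $\cU(A)$ in $\cU(M)$ by a Polish-group/Baire-category dichotomy (closed cosets covering the connected Polish group $\cU(M)$ would force $\cU(A)=\cU(M)$). Your argument is valid --- the coset space of a closed subgroup of a Polish group is Polish, hence countable or of cardinality continuum, and you correctly need singularity to see that distinct cosets give distinct conjugates of $A$ and hence distinct orbits --- but it is less elementary and leans on the unproved singularity assertion at two points (distinguishing cosets, and the pairwise non-conjugacy of the $u_\theta$). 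The paper's explicit family avoids both the Baire machinery and any appeal to singularity beyond what the displayed bimodule computation already gives.
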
 

\begin{proof}
    Embed a diffuse abelian von Neumann algebra $A$ in $M$ such that $L^2(M) \ominus L^2(A)$ is a coarse $A$-$A$-bimodule. Let $u$ be a Haar unitary in $A$.  Let $e:L^2(M) \to L^2(M)\ominus L^2(A)$ denote the orthogonal projection onto $L^2(M)\ominus L^2(A).$ Let $v\in M$ be a unitary such that $v^*uv \in A.$ Observe that $(u^n)_n$ be a sequence of unitaries in $A_\lambda$ going weakly to 0. We now observe that by the $A$-$A$-bimodularity of the map $e,$ $u^ne(v) = e(u^nv)= e(vv^*u^nv) = e(v)v^*u^nv$. Therefore
    \begin{align*}
        \|e(v)\|_2^2 &= \langle u^{-n}u^ne(v), e(v) \rangle\\
        &= \langle u^{-n}e(v)v^*u^nv, e(v) \rangle.
    \end{align*}
    We also note that since $L^2(M)\ominus L^2(A)\cong \bigoplus L^2(A)\otimes L^2(A)$ is a coarse $A$-$A$-bimodule, it is densely spanned by direct sums of vectors of the form $\xi\otimes \eta$. But we see that
    \begin{align*}
        \langle u^n\xi\otimes \eta v^*u^nv,\xi\otimes \eta\rangle &= \langle u^n\xi,\xi\rangle\langle\eta v^*u^nv,\eta\rangle\to 0 
    \end{align*}
    since $u^n\to 0$ weakly. We thus see that $\langle u^{-n}e(v)v^*u^nv, e(v) \rangle\to 0 $ and therefore $\|e(v)\|_2^2 = 0.$ Hence $v \in A.$ 

    For the moreover, we observe that if $W^*(u) = A$ then $\lambda u$ and $\mu u$ are non-conjugate for all $\lambda\neq \mu \in \T.$ We also observe that if $u\in A$ is any Haar unitary and $w\in M$ is Haar such that $u\sim w$, then necessarily $w\in A.$
     
     Now let $v$ be a Haar unitary in $M\setminus A$. Write $v = e^{ih}$ for some self-adjoint operator $h\in M\setminus A.$ Define $v_t = e^{ith}$ for $t\in \R.$ Since $v\not\in A,$ by continuity $v_t \not\in A$ for all $t$ in some non-trivial interval $(1-1/N,1+1/N)$. Since $v_{s+t}=v_sv_t,$ we see that $v_t\not\in A$ for all $0\neq t\in (-1/N,1/N).$ Therefore for all $s,t \in (-1/(2N),1/(2N))$, $v_s^*v_t\in A$ only if $s=t.$ Therefore $v_s^*v_t u v_t^*v_s \not\in A$ for any such distinct $s,t,$ by the coarseness of $A$ in $M$. This implies that for $t\in (-1/(2N),1/(2N))$, the $\sim$-orbits of $v_t u v_t^*$ are all distinct. 
\end{proof}

\subsection{Single generation arguments}

The proof of the following lemma is inspired by \cite{shen2009type}.

\begin{lem}\label{exist-of-hyperfinite-generator}
    Let $M$ be a \twoone factor and $u \in M$ a Haar unitary. Then for all $1 \geq \ee > 0$ there is $R \subset M$ irreducible hyperfinite subfactor such that $\{R,u\}''$ is generated by a unitary $v$ contained in a matrix algebra $\M_N(\C) \subset R$ and a self-adjoint $T$ whose support is majorized by a projection $p \in \M(\C)$ with trace less than $\ee.$ 
\end{lem}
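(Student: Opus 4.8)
\emph{Overall strategy.} The plan is to realize $P:=\{R,u\}''$ as $\M_N(\C)\,\overline{\otimes}\,Q$ with $Q=\M_N(\C)'\cap P$, and to recover both tensor factors from two carefully chosen elements: the finite-dimensional unitary $v$ will seed the matrix units of $\M_N(\C)$, while the corner self-adjoint $T$ will carry \emph{all} the remaining data of $P$. First I would fix $N>2/\ee$ (with $N\geq 3$) and invoke Popa's theorem to get an irreducible hyperfinite subfactor $R\subset M$; since $R'\cap M=\C$ we have $P'\cap M\subseteq R'\cap M=\C$, so $P$ is a \twoone factor. Fix a unital copy $\M_N(\C)\subset R$ with matrix units $(e_{ij})$, set $p=e_{11}+e_{22}$ so that $\tau(p)=2/N<\ee$, and let $R_0:=\M_N(\C)'\cap R$, a hyperfinite \twoone factor with $R=\M_N(\C)\,\overline{\otimes}\,R_0$. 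Because $\M_N(\C)$ is a type $\mathrm{I}_N$ subfactor of $P$, we get $P=\M_N(\C)\,\overline{\otimes}\,Q$ with $Q\supseteq R_0$, and the compression $q\mapsto e_{11}qe_{11}$ is a $*$-isomorphism $Q\cong e_{11}Pe_{11}=:C$ whose inverse is the ``spreading'' map $c\mapsto\sum_i e_{i1}ce_{1i}$.

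\emph{The two-scale encoding.} For $v$ I would take the cyclic shift $v=\sum_i e_{i+1,i}$ (indices mod $N$), for which $W^*(v,e_{11})=\M_N(\C)$. The element $T$ must compress into $Q$ both the hyperfinite tail $R_0$ and the element $u$. Writing $u=\sum_{i,j}e_{ij}u_{ij}$ with $u_{ij}\in Q$, one has $Q=W^*\!\big(R_0\cup\{u_{ij}\}\big)$. Now pick an \emph{inner} matrix subalgebra $\M_{N^2}(\C)\subset R_0$ with matrix units $(h_{ab})$ and a bijection $\phi:\{1,\dots,N\}^2\to\{1,\dots,N^2\}$, and store the $N^2$ components of $u$ in the single element $w:=\sum_{i,j}h_{\phi(i,j),1}\,u_{ij}\,h_{1,\phi(i,j)}\in C$, so that $u_{ij}=h_{1,\phi(i,j)}\,w\,h_{\phi(i,j),1}$. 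Since hyperfinite factors are singly generated, write $g=g'+ig''$ for a single generator of $R_0$ with $g',g''$ self-adjoint (viewed, via the corner isomorphism, as elements $g',g'',w\in e_{11}Me_{11}$). Finally I would define
$$T=g'+e_{21}g''e_{12}+we_{12}+e_{21}w^{*}\in pMp,$$
which is self-adjoint with $\supp(T)\le p$, and is built so that its corner compressions are exactly $e_{11}Te_{11}=g'$, $e_{12}Te_{21}=g''$, and $e_{11}Te_{21}=w$.

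\emph{Recovery.} The verification runs in two bootstrapped stages. Since $\supp(T)=p\in W^*(T)$ and $vpv^{*}=e_{22}+e_{33}$, the window intersection $e_{11}=p-(p\wedge vpv^{*})$ lies in $W^*(v,T)$, whence $\M_N(\C)=W^*(v,e_{11})\subset W^*(v,T)$ and all $e_{ij}$ are available. Compressing $T$ by these matrix units returns $g',g'',w\in C$; thus $W^*(g',g'')$ recovers the corner copy of $R_0$ \emph{together with its matrix units} $(h_{ab})$, which then un-store the components $u_{ij}=h_{1,\phi(i,j)}\,w\,h_{\phi(i,j),1}$ from $w$. Hence $e_{11}W^*(v,T)e_{11}\supseteq W^*\!\big(R_0,\{u_{ij}\}\big)\cong Q$, and combined with $\M_N(\C)$ this gives $W^*(v,T)\supseteq\M_N(\C)\,\overline{\otimes}\,Q=P$. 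The reverse inclusion is clear since $v\in\M_N(\C)\subset P$ and $T\in pPp$, so $W^*(v,T)=\{R,u\}''$ with $v\in\M_N(\C)$ and $\tau(p)=2/N<\ee$, as required.

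\emph{Main obstacle.} The hard part is not any single estimate but the \emph{bookkeeping of the two-scale spreading} and the avoidance of circularity: the outer scale $\M_N(\C)$ (seeded by $v$ and $\supp T$) must be fully exposed before one can compress $T$ into $C$, and the inner scale $\M_{N^2}(\C)\subset R_0$ (seeded by $g',g''$) must be shown to be genuinely generated \emph{before} it is used to un-store $w$. Forcing $\tau(p)=2/N$ below $\ee$ means every piece of the infinite-dimensional algebra $R$ has to be funnelled through the single bootstrapping generator $g$ of $R_0$ and the single storage element $w$, and I expect the delicate points to be (i) checking that the corner isomorphism $Q\cong e_{11}Pe_{11}$ intertwines the spreading maps correctly at both scales, and (ii) arranging $T$ self-adjoint with $\supp(T)\le p$ while still encoding the non-self-adjoint data $g$ and $w$. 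A secondary (routine) point is securing irreducibility of $R$ so that $P$ is a factor and the compression is available.
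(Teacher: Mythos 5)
There is a genuine gap in the ``storage'' step for $u$. You decompose $u=\sum_{i,j}e_{ij}u_{ij}$ with respect to an \emph{arbitrary} matrix subalgebra $\M_N(\C)\subset R$, so the coefficients $u_{ij}=\sum_k e_{ki}ue_{jk}$ are full-sized elements of the relative commutant $Q$, not elements of any corner. Your encoding $w=\sum_{i,j}h_{\phi(i,j),1}\,u_{ij}\,h_{1,\phi(i,j)}$ with $(h_{ab})$ matrix units of $\M_{N^2}(\C)\subset R_0\subset Q$ then cannot be undone: computing $h_{1,\phi(i,j)}\,w\,h_{\phi(i,j),1}$ gives $h_{11}u_{ij}h_{11}$, not $u_{ij}$, because $u_{ij}$ does not commute with $\M_{N^2}(\C)$. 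Conjugation by a partial isometry with small range compresses a general element and destroys information; your recovery formula is simply false, and no choice of $\phi$ or of the inner matrix algebra repairs it. This is exactly the point where the trace of $p$ is paid for: one can only shuttle around, losslessly, elements that \emph{already live in corners}, and one needs enough distinct off-diagonal slots to hold them all.

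The paper's proof avoids this by first choosing the diagonal projections $p_1,\dots,p_N$ of $\M_N(\C)$ to be spectral projections of $u$ itself, so that $u=\sum_i p_iup_i$ is block-diagonal and each piece $p_iup_i$ genuinely lives in the corner $p_iMp_i$. These $N$ corner-sized pieces are then stored in $N$ distinct off-diagonal blocks $e_{\pi_1(i)i}(p_iup_i)e_{i\pi_2(i)}$ of a region of trace about $\ee/2$, which is possible precisely because of the combinatorial condition $N\leq \lfloor\ee N/2\rfloor(\lfloor\ee N/2\rfloor-1)/2$; recovery by $e_{i\pi_1(i)}T_2e_{\pi_2(i)i}=p_iup_i$ is then exact. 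Your treatment of $R$ itself (encoding two self-adjoint generators of the small corner $p_1Rp_1$ into disjoint blocks of $T$, and recovering $e_{11}$ and hence $\M_N(\C)$ from $v$ and the support of $T$) is essentially the paper's argument and is fine; it is only the handling of $u$ that needs the diagonalization step. As written, your proof establishes single generation of $\{R\}''$ but not of $\{R,u\}''$.
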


\begin{proof}
    Choose an integer $N > 0$ large enough so that $N \leq \frac{\lfloor \ee N/2 \rfloor(\lfloor \ee N/2 \rfloor - 1)}{2}$ and $\frac{1}{N} < \frac{\ee}{6}$. Choose $N$ orthogonal projections $\{p_i\}_{i=1}^N$ in $W^*(u)$ which sum to 1, whose traces are all $\frac{1}{N}$, and
    \begin{equation*}
        u = \sum_{i=1}^N p_iup_i.
    \end{equation*}

    By Corollary 4.1 of \cite{Pop81kadison}, there exists an irreducible subfactor $R \subset M$. By conjugating by a unitary, we may assume $p_i \in R$ for all $i$. Clearly, there is a matrix subalgebra $\M_N(\C) \subset R$ such that $p_i \in \M_N(\C)$ for all $i$. The $p_i$ may simply be regarded as the matrix units along the diagonal. We denote by $e_{ij}$ the other matrix units in $\M_N(\C).$ Then $R = \M_N(\C) \otimes p_1Rp_1$. The algebra $p_1Rp_1$ is another copy of the hyperfinite \twoone factor, which is generated (together with its unit) by two self-adjoints $S_1, S_2$. $\M_N(\C)$ is generated by $p_1$ and a single shift unitary $v = \sum_{i=1}^{N-1} e_{i(i+1)} + e_{N1}$, so $R = \M_N(\C) \otimes p_1Rp_1$ is generated by
    \begin{equation*}
        v, p_1, S_1, S_2.
    \end{equation*}

    By adding appropriate scalar multiples of $p_1$ to $S_1$ and $S_2$, we may assume without loss of generality that the spectra of $S_1$ and $S_2$ (within $p_1Rp_1$) are disjoint and are both disjoint from $\{1\}$. Then by the functional calculus, one easily sees that $R = \M_N(\C) \otimes p_1Rp_1$ is generated by
    \begin{equation*}
        v, T_1 = p_1 + e_{21}S_1e_{12} + e_{31}S_2e_{13}.
    \end{equation*}

    Note that $T_1$ is a self-adjoint with support majorized by $p_1 + p_2 + p_3$, which has trace $\frac{3}{N} < \frac{\ee}{2}$.

    Now, consider the region of $\M_N(\C)$ supported on $P = \sum_{i=4}^{3+\lfloor \ee N/2 \rfloor} p_i$. The upper triangular area of this region has $\frac{\lfloor \ee N/2 \rfloor(\lfloor \ee N/2 \rfloor - 1)}{2}$ blocks. Since we have chosen $N$ so that $N \leq \frac{\lfloor \ee N/2 \rfloor(\lfloor \ee N/2 \rfloor - 1)}{2}$, there exists an injection $\pi: \{1, \cdots, N\} \to \{(i,j) \in \{4, \cdots, 3+\lfloor \ee N/2 \rfloor\}^2: i < j\}$. Let $\pi(i) = (\pi_1(i), \pi_2(i))$. We may thus define a self-adjoint $T_2$ by
    \begin{equation*}
        T_2 = \sum_{i=1}^N (e_{\pi_1(i)i}ue_{i\pi_2(i)} + e_{\pi_2(i)i}u^\ast e_{i\pi_1(i)}).
    \end{equation*}

     Because $e_{\pi_1(i)i}ue_{i\pi_2(i)}$ all lie within the upper triangular area while $e_{\pi_2(i)i}u^\ast e_{i\pi_1(i)}$ all lie within the lower triangular area, and because $u = \sum_{i=1}^N p_iup_i$ only has components along the diagonal, we easily see that $u \in \{\M_N(\C), T_2\}''$. $T_2$ has support majorized by $P$. By adding appropriate scalar multiples of $P$ to $T_2$, we may assume $T_2$ has spectrum (within $P\{R, u\}''P$) disjoint from the spectrum of $T_1$. Since $T_1$ is supported within $p_1 + p_2 + p_3$, which is orthogonal to $P$, again by functional calculus, we see that $\{R, u\}''$ is generated by
     \begin{equation*}
         v, T = T_1 + T_2.
     \end{equation*}

     $T$ has support contained within $p = p_1 + p_2 + p_3 + P$. Since $\tau(p_1 + p_2 + p_3) < \frac{\ee}{2}$ and we easily see that $\tau(P) \leq \frac{\ee}{2}$, we have $\tau(p) < \ee$, as desired.
\end{proof}

The following lemma is a generalization of Proposition 1 of \cite{GeShenGen} and follows nearly the same proof.

\begin{lem}\label{generator-seq-commuting}
    For all $\ee > 0$ and $n > 0$ there is $N > 0$ such that if $u_1,\ldots,u_n$ are unitaries in a \twoone factor $M$ satisfying
    \begin{enumerate}
        \item $u_{i+1}^*u_iu_{i+1} \in W^*(u_1,\ldots,u_i)$ for all $i = 1, \cdots, n-1$; and
        \item For each $i$, $u_i$ is either diffuse; or is finite-dimensional, has all its atoms having the same trace, and has the number of atoms being a multiple of $N$;
    \end{enumerate}
    then for any irreducible hyperfinite subfactor $R \subset M$, $W^*(R,u_1,\ldots,u_n)$ is generated by $R,$ $u_1,$ and a self-adjoint whose support is majorized by a projection $p \in R$ with trace less than $\ee.$
\end{lem}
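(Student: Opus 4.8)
The plan is to induct on $n$, keeping $R$ and $u_1$ fixed and, at the $k$-th stage, encoding the single unitary $u_k$ over the already-constructed algebra $M_{k-1} := W^*(R,u_1,\ldots,u_{k-1})$ into one self-adjoint with small support, in exactly the way a block-diagonal unitary is encoded in Lemma~\ref{exist-of-hyperfinite-generator}. Writing $A_{k-1} := W^*(u_1,\ldots,u_{k-1}) \subseteq M_{k-1}$, the base case is trivial since $M_1 = W^*(R,u_1)$; note that $u_1$ must be retained as a generator precisely because there is no ``$u_0$'' to feed into hypothesis (1). At the end I will have self-adjoints $T_2,\ldots,T_n$, one per stage, each supported on a projection in $R$, and I will amalgamate them into a single self-adjoint $T$ by arranging, via scalar shifts and functional calculus as in Lemma~\ref{exist-of-hyperfinite-generator}, that their supports are orthogonal and their spectra disjoint, so that $W^*(R,u_1,T) = W^*(R,u_1,T_2,\ldots,T_n) = W^*(R,u_1,\ldots,u_n)$.

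The heart of the stage-$k$ construction is a factorization $u_k = w\,s$ in which $w\in M_{k-1}$ and $s$ is block-diagonal with respect to a system of $N$ projections lying in $M_{k-1}$; this is where hypothesis (1) is indispensable. Set $b_k := u_k^{*}u_{k-1}u_k$, which by hypothesis (1) lies in $A_{k-1}\subseteq M_{k-1}$. Using hypothesis (2) for $u_{k-1}$ --- either cutting the spectrum of a diffuse $u_{k-1}$ into $N$ arcs, or grouping the equal-trace atoms of a finite-dimensional $u_{k-1}$ into $N$ blocks, which is possible because the number of atoms is a multiple of $N$ --- choose a Borel partition $f_1,\ldots,f_N$ and set $q_a := f_a(u_{k-1})$ and $r_a := f_a(b_k)$. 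Then $\{q_a\}$ and $\{r_a\}$ are each $N$ orthogonal projections of trace $1/N$ summing to $1$, both families lie in $A_{k-1}\subseteq M_{k-1}$ (crucially $r_a\in A_{k-1}$ because $b_k\in A_{k-1}$), and $u_k r_a u_k^{*} = q_a$. Picking any unitary $w\in M_{k-1}$ with $w r_a w^{*} = q_a$ (which exists since $M_{k-1}$ is a \twoone factor and the projections match in trace, cf.\ Lemma~\ref{connes-estimate}), the unitary $s := w^{*}u_k$ satisfies $s r_a s^{*} = w^{*}q_a w = r_a$, i.e.\ $s$ commutes with each $r_a$ and hence is block-diagonal with respect to $\{r_a\}$. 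Finally, conjugating by a further unitary $w'\in M_{k-1}$ carrying $r_a$ to the diagonal matrix units $e_{aa}$ of a copy $\M_N(\C)\subseteq R$ produces $\tilde s_k := w' s w'^{*}$, which is block-diagonal with respect to $\{e_{aa}\}\subseteq R$.

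Now $\tilde s_k$ is exactly the kind of object handled in Lemma~\ref{exist-of-hyperfinite-generator}: I store its $N$ diagonal blocks $e_{aa}\tilde s_k e_{aa}$ in $N$ distinct off-diagonal positions inside a small corner of a large matrix subalgebra $\M_K(\C)\subseteq R$, via an injection of the index set into the upper-triangular slots of that corner, and let $T_k$ be the resulting self-adjoint together with its adjoint entries. Reassembling through the matrix units of $\M_K(\C)$, which are available from $R$ itself, recovers $\tilde s_k\in W^*(R,T_k)$, whence the identity $u_k = w\,w'^{*}\,\tilde s_k\,w'$ with $w,w'\in M_{k-1}$ shows $u_k\in W^*(R,u_1,\ldots,u_{k-1},T_k)$; conversely $T_k\in M_k$, so the two algebras coincide and $M_k = W^*(R,u_1,\ldots,u_{k-1},T_k)$. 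The value of $N$ is dictated by (and this fixes the $N$ in the statement): it must be large enough that the small-atom cutting above is available and that $(n-1)N$ upper-triangular slots fit into a corner of $\M_K(\C)$ of trace below $\ee$; since $\binom{m}{2}$ slots sit in a corner of trace $m/K$, taking $m\approx\sqrt{2(n-1)N}$ and then $K$ large forces this corner to have trace $<\ee$, and the disjoint-spectra amalgamation keeps the total support below $\ee$.

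The single genuinely delicate point --- and the place where hypothesis (1) does all the work --- is the factorization step: a priori the spectral projections $r_a = u_k^{*}q_a u_k$ of $b_k$ need not lie in $M_{k-1}$, and it is only because $b_k\in A_{k-1}$ that they, and hence the conjugators $w,w'$, are internal to $M_{k-1}$. This is what lets me store just the block-diagonal $\tilde s_k$ rather than all of $u_k$, and what makes the recovery non-circular, since $b_k$, $r_a$, $w$, and $w'$ are referenced purely as elements of the already-recovered algebra $M_{k-1}$ and never require $u_k$ itself. I expect the bookkeeping for the simultaneous packing and the functional-calculus amalgamation to be routine given Lemma~\ref{exist-of-hyperfinite-generator}; the analogue of replacing the diffuseness assumption of \cite{GeShenGen} by the small-atom hypothesis (2) is precisely the freedom, just used, to cut each $u_{k-1}$ into $N$ equal-trace blocks even when it is finite-dimensional.
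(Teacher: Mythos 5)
Your proposal is correct and follows essentially the same route as the paper's proof: hypothesis (1) is used exactly as you describe to place the conjugated spectral projections of $u_{k-1}$ inside the previously built factor $W^*(R,u_1,\dots,u_{k-1})$, so that $u_k$ factors through a block-diagonal unitary whose diagonal blocks are compressed into off-diagonal slots of a small corner and the resulting self-adjoints are amalgamated via orthogonal supports and disjoint spectra (the paper writes this as the two-sided conjugate $W_i u_{i+1} V_i$ and uses $K^i$ projections at stage $i$ rather than a fixed $N$, but these are cosmetic differences). The one slip is in the trace accounting at the end: a block of trace $1/N$ must be stored in a slot of trace $1/N$, not $1/K$, so the corner supporting $T_k$ has trace about $\sqrt{2/N}$ and it is largeness of $N$ --- not of an auxiliary $K$ --- that forces the total support below $\ee$, which is exactly what the statement permits since $N$ is the parameter the lemma supplies as a function of $\ee$ and $n$.
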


\begin{proof}
    By direct calculations, we see that the positive solutions to the equation $\frac{m(m-1)}{2} \geq k$ are given by $m \geq \frac{1+\sqrt{1+8k}}{2} = \frac{1}{2} + \sqrt{\frac{1}{4} + 2k}$. We also see that $\sqrt{\frac{1}{4} + 2k} - \sqrt{2k} \leq \frac{1}{2}$ for all $k \geq 0$. Note that $\sqrt{\frac{1}{4} + 2k} - \sqrt{2k} \leq \frac{1}{2}$ may be rearranged to $\frac{1+\sqrt{1+8k}}{2} \leq \sqrt{2k} + 1$. So, for any $k \geq 0$, there is an integer $m$ such that $\frac{m(m-1)}{2} \geq k$ and $m \leq \sqrt{2k} + 2$. Now choose $K > 0$ such that $\sum_{i=1}^{n-1} (\sqrt{\frac{2}{K^i}} + \frac{2}{K^i}) < \ee$ and set $N = K^{n-1}$.
    
    By the conditions on $u_i$, we may choose, for each $i = 1, \cdots, n-1$, pairwise orthogonal projections $F^{(i)}_1, \cdots, F^{(i)}_{K^i}$ in $\{u_i\}''$ all of trace $\frac{1}{K^i}$. Fix unital embeddings $\M_K(\C) \subset \M_{K^2}(\C) \subset \cdots \subset \M_{K^{n-1}}(\C) \subset R$ with the embeddings being the standard diagonal ones. Let $\{E^{(i)}_{\alpha\beta}\}_{1 \leq \alpha, \beta \leq K^i}$ be the standard system of matrix units for $\M_{K^i}(\C)$. Note that the embeddings between the matrix algebras are standard diagonal embeddings, so $E^{(i)}_{jj} = \sum_{k=(j-1)K+1}^{jK} E^{(i+1)}_{kk}$.
    
    Let $R_i = \{R, u_1, \cdots, u_i\}''$. As $R \subset M$ is irreducible, $R_i$ is a factor for all $i$. Then by the conditions of $F^{(i)}_j$ and $E^{(i)}_{jj}$, we see that there exist unitaries $W_i \in R_i$ so that $W_iF^{(i)}_jW_i^\ast = E^{(i)}_{jj}$ for all $j = 1, \cdots, K^i$. Because $u_{i+1}^\ast u_i u_{i+1} \in W^\ast(u_1, \cdots, u_i) \subset R_i$, and because $F^{(i)}_1, \cdots, F^{(i)}_{K^i} \in \{u_i\}''$, we have $u_{i+1}^\ast F^{(i)}_j u_{i+1} \in R_i$, so there exists unitary $V_i \in R_i$ s.t. $V_iE^{(i)}_{jj}V_i^\ast = u_{i+1}^\ast F^{(i)}_j u_{i+1} = u_{i+1}^\ast W_i^\ast E^{(i)}_{jj} W_i u_{i+1}$ for all $j$. So $W_i u_{i+1}V_i$ commutes with $E^{(i)}_{jj}$ for all $j$; i.e.,
    \begin{equation*}
        W_i u_{i+1}V_i = \sum_{j=1}^{K^i} E^{(i)}_{jj}W_i u_{i+1}V_iE^{(i)}_{jj}.
    \end{equation*}

    Choose an integer $m_i$ such that $\frac{m_i(m_i-1)}{2} \geq K^i$ and $m_i \leq \sqrt{2K^i} + 2$. Let $M_i = \sum_{j=1}^{i-1} m_jK^{i-j}$. Then, there exists an injection $\pi_i: \{1, \cdots, K^i\} \to \{(\alpha,\beta) \in \{M_i+1, \cdots, M_i+m_i\}^2: \alpha < \beta\}$. Let $\pi_i(j) = (\pi_{i1}(j), \pi_{i2}(j))$. We may thus define a self-adjoint $T_i$ by
    \begin{equation*}
        T_i = \sum_{j=1}^{K^i} E^{(i)}_{\pi_{i1}(j)j}W_i u_{i+1}V_iE^{(i)}_{j\pi_{i2}(j)} + E^{(i)}_{\pi_{i2}(j)j}(W_i u_{i+1}V_i)^\ast E^{(i)}_{j\pi_{i1}(j)}.
    \end{equation*}

    As $W_i, V_i \in R_i$, we observe that $u_{i+1} \in \{R_i, T_i\}''$, so $R_{i+1} = \{R_i, T_i\}''$. Note that $T_i$ is supported within $\sum_{j=M_i+1}^{M_i+m_i} E^{(i)}_{jj}$. By adding appropriate scalar multiples of $\sum_{j=M_i+1}^{M_i+m_i} E^{(i)}_{jj}$, we may assume the $T_i$ have the nonzero parts of their spectra disjoint from each other. Now, by the fact that $E^{(i)}_{jj} = \sum_{k=(j-1)K+1}^{jK} E^{(i+1)}_{kk}$, we see that the supports of $T_i$ for different $i$ are disjoint. Thus, by functional calculus, we see that all $T_i$, and therefore all $u_{i+1}$, are contained in the algebra generated by
    \begin{equation*}
        R_1, T = \sum_{i=1}^{n-1} T_i.
    \end{equation*}

    That is, $W^*(R,u_1,\ldots,u_n)$ is generated by $R,$ $u_1,$ and $T$. We note that $T$ is supported within $p = \sum_{i=1}^{n-1} \sum_{j=M_i+1}^{M_i+m_i} E^{(i)}_{jj} \in R$. And we observe that
    \begin{equation*}
        \tau(p) = \sum_{i=1}^{n-1} \frac{m_i}{K^i} \leq \sum_{i=1}^{n-1} (\sqrt{\frac{2}{K^i}} + \frac{2}{K^i}) < \ee,
    \end{equation*}
    completing the proof.
\end{proof}

\begin{thm}
    If $M$ is generated by a countable set of sequentially commuting Haar unitaries, then $M$ is singly generated.
\end{thm}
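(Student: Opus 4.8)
The plan is to bring everything inside $M$ and reduce to the situation of Lemmas \ref{exist-of-hyperfinite-generator} and \ref{generator-seq-commuting}: that $M$ is generated by an irreducible hyperfinite subfactor $R$ together with a single self-adjoint of small support, and then to collapse this to one generator by a patching argument. So I would first fix an enumeration $\{u_i\}_{i\in\N}$ of the generating Haar unitaries and, as in Lemma \ref{exist-of-hyperfinite-generator}, fix an irreducible hyperfinite subfactor $R\subset M$. Since all the $u_i$ lie in a single $\sim^\cU$-class, I would connect them to the base point $u_1$: for each $j$, the hypothesis $u_1\sim^\cU u_j$ together with Proposition \ref{equivalence of internal and external} produces, inside $M$ itself, a sequence of Haar unitaries $v^{(j)}_m\to u_j$ in SOT with $u_1\sim^+_k v^{(j)}_m$ for every $m$.

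For each such relation $u_1\sim^+_k v^{(j)}_m$, Definition \ref{seq-comm-def}(c) supplies a chain $u_1=y_0,y_1,\dots,y_{k-1},y_k=v^{(j)}_m$ of \emph{genuinely consecutively commuting} unitaries in $M$ whose interior terms $y_1,\dots,y_{k-1}$ are finite-dimensional with equal atom weights; by running the discretization underlying Lemma \ref{lift-to-commuting-almost-haars} with parameter a large multiple of the integer $N$ demanded by Lemma \ref{generator-seq-commuting}, I can further insist that these interior terms have atoms of small and equal measure with atom count a multiple of $N$. This is exactly the data needed to invoke Lemma \ref{generator-seq-commuting} on the list $u_1,y_1,\dots,y_{k-1},v^{(j)}_m$: consecutive commutation makes hypothesis (1) automatic (as $y_{\ell+1}^\ast y_\ell y_{\ell+1}=y_\ell$), while the endpoints $u_1,v^{(j)}_m$ fall under the diffuse alternative and the interior terms under the finite-dimensional alternative of hypothesis (2). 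The lemma then expresses $W^\ast(R,u_1,y_1,\dots,v^{(j)}_m)$, which contains $v^{(j)}_m$, as $W^\ast(R,u_1,T_{j,m})$ for a self-adjoint $T_{j,m}$ whose support projection in $R$ has trace as small as I like.

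The third phase, and the step I expect to be the main obstacle, is to pass from these approximate chains to the exact generators and to the whole algebra at once; this is where one uses that only the generators, not all pairs, sequentially commute, and where a refinement of the argument of \cite{shen2009type} is needed. I would choose the target traces of the support projections of the $T_{j,m}$ to be summable and arrange these projections to be pairwise orthogonal inside the hyperfinite $R$, with the spectra of the $T_{j,m}$ mutually disjoint; then $T=\sum_{j,m}T_{j,m}$ is a single self-adjoint, each $T_{j,m}$ is recovered from $T$ by functional calculus, and hence every $v^{(j)}_m\in W^\ast(R,u_1,T)$. Since $v^{(j)}_m\to u_j$ in SOT and $W^\ast(R,u_1,T)$ is SOT-closed, each generator $u_j$ lies in $W^\ast(R,u_1,T)$, so $M=W^\ast(R,u_1,T)$. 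Finally, absorbing $u_1$ by Lemma \ref{exist-of-hyperfinite-generator} (whose output matrix unit already lies in $R$) and merging its small self-adjoint into $T$ by the same orthogonal-corner packing gives the clean conclusion $M=W^\ast(R,T)$ with $R$ irreducible hyperfinite and $T=T^\ast$ of small support.

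It remains to collapse $W^\ast(R,T)$ to one generator, which I would do by the patching and functional-calculus technique already visible in Lemma \ref{exist-of-hyperfinite-generator}. Decomposing $R\cong \M_N(\C)\otimes p_1Rp_1$ and writing the hyperfinite corner $p_1Rp_1$ via two self-adjoints, I would encode those two self-adjoints together with $T$ into a single self-adjoint by hiding them in pairwise orthogonal off-diagonal matrix corners (there is room because $T$ has small support and $N$ is large) with spectrally disjoint ranges; together with the shift unitary of $\M_N(\C)$ this presents $M$ as generated by one unitary and one self-adjoint, and a final spectral-separation step combines these into a single element, as in \cite{GeShenGen, shen2009type}. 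The crux throughout is the third phase: transferring the sequential commutation of the generating set to genuine commutation of finite-dimensional chains inside $M$ and controlling the SOT-approximation well enough that the exact generators, not merely their approximants, are captured.
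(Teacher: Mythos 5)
Your proposal is correct and follows essentially the same route as the paper: internalize each chain via Proposition \ref{equivalence of internal and external} and Lemma \ref{lift-to-commuting-almost-haars}, compress each chain into a small-support self-adjoint via Lemma \ref{generator-seq-commuting}, pack the supports orthogonally in $R$ with disjoint spectra, and recover the exact generators from their $2$-norm approximants. The only cosmetic difference is bookkeeping: the paper enumerates the generators with infinite repetition and uses one approximant per index with error tending to zero, whereas you keep a full SOT-convergent sequence of approximants for each generator and invoke closedness of the generated algebra, which amounts to the same argument.
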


\begin{proof}
    Let $\{u_0, u_1, \cdots\}$ be an enumeration of a countable generating set of sequentially commuting Haar unitaries in which every unitary appears infinitely many times. By Lemma \ref{exist-of-hyperfinite-generator}, we may fix an irreducible $R \subset M$ s.t. $\{R, u_0\}''$ is generated by a unitary $v$ and a self-adjoint $T_0$ whose support is majorized by a projection $p_0 \in R$ with $\tau(p_0) < \frac{1}{2}$.
    
    Now, for each $k > 0$, let $n_k$ be such that $u_0 \sim_{n_k}^\cU u_k$. Let $\ee_k = \frac{1}{2^{k+1}}$. Choose $N = N_k > 0$ that satisfies the conclusions of Lemma \ref{generator-seq-commuting} with $\ee = \ee_k$ and $n = n_k$. Now, using Lemma \ref{lift-to-commuting-almost-haars} and the proof of Proposition \ref{equivalence of internal and external}, we see that there exists finite-dimensional unitaries $(x^{(k)}_i)_{i=1}^{n_k}$ and a Haar unitary $u'_k$ s.t. $[u_0, x^{(k)}_1] = [x^{(k)}_i, x^{(k)}_{i+1}] = [x^{(k)}_{n_k}, u'_k] = 0$ for all $i=1, \cdots, n_k-1$; $\|u'_k - u_k\|_2 < \ee_k$; and all atoms of $x^{(k)}_i$ have the same trace and the number of atoms are multiples of $N_k$, for all $i = 1, \cdots, n_k$. By Lemma \ref{generator-seq-commuting}, then, $\{R, u_0, x^{(k)}_1, \cdots, x^{(k)}_{n_k}, u'_k\}''$ is generated by $R$, $u_0$, and a self-adjoint $T_k$ whose support is majorized by a projection $p_k \in R$ with $\tau(p_k) < \ee_k = \frac{1}{2^{k+1}}$. As $\tau(p_0) + \sum_{k=1}^\infty \tau(p_k) < 1$, we may, after conjugating $T_k$ by unitaries in $R$, assume all $p_k$ (as well as $p_0$) are orthogonal to each other. For $k > 0$, by multiplying by appropriate scalars and adding appropriate scalar multiples, we may assume the nonzero parts of the spectra of $T_k$ are disjoint from each other and are all contained in a fixed compact interval disjoint from the spectrum of $T_0$. Then $T = T_0 + \sum_{k=1}^\infty T_k$ is a well-defined self-adjoint, and by functional calculus, we see that $\{R, u_0\}'' \subset \{v, T\}''$. Therefore, we also have $u'_k \in \{v, T\}''$ for all $k > 0$. Since $\{u_0, u_1, \cdots\}$ generates $M$, all generators appear infinitely many times, and $\|u'_k - u_k\|_2 < \ee_k \to 0$, we have $\{u_0, u'_1, u'_2, \cdots\}$ generates $M$, so $M$ is generated by a unitary $v$ and a self-adjoint $T$, whence it is singly generated (by $T + \ln(v)$, say).
\end{proof}

\begin{remark}\label{stability under amalgams}
    We note that a refinement of the above proof shows the following: Fix $\ee > 0$. Then, for $N > 0$ large enough, there exists a matrix subalgebra $\M_N(\C) \subset M$ s.t. $M$ is generated by the right shift unitary $v = \sum_{i=1}^{N-1} e_{i(i+1)} + e_{N1}$ in $\M_N(\C)$ and a self-adjoint $T$ whose support is majorized by a diagonal projection $p \in \M_N(\C)$ with trace less than $\ee$. This implies $\cG(M) = 0$ in the sense of \cite{shen2009type}. In particular, by Theorem 5.5 of \cite{shen2009type}, if $M$ is generated by a sequence of subfactors $M_k$, each one generated by a countable set of sequentially commuting Haar unitaries, and furthermore $M_k \cap M_{k+1}$ is diffuse for all $k$, then $M$ is singly generated.
\end{remark}

\bibliographystyle{amsalpha}
\bibliography{inneramen}

\end{document}